\definecolor{cerulean}{rgb}{0.0, 0.48, 0.65}
\numberwithin{equation}{section}
\newtheorem{theorem}{Theorem}[section]
\theoremstyle{plain}
\newtheorem{lemma}[theorem]{Lemma}
\theoremstyle{plain}
\newtheorem{proposition}[theorem]{Proposition}
\theoremstyle{plain}
\theoremstyle{definition}
\newtheorem{remark}[theorem]{Remark}
\newcommand{\N}{{\mathbb N}}
\newcommand{\Z}{{\mathbb Z}}
\newcommand{\R}{{\mathbb R}}
\newcommand{\eps}{\varepsilon}
\newcommand{\beq}{\begin{equation}}
\newcommand{\eeq}{\end{equation}}
\renewcommand{\le}{\leqslant}
\renewcommand{\ge}{\geqslant}
\newcommand{\w}{W^{s,p}_0(\Omega)}
\newcommand{\fpl}{(-\Delta)_p^s\,}
\newcommand{\sm}{\mathcal{S}_{s,p}(m)}
\newcommand{\s}{\mathbb{S}^1}
\newcommand{\leqnomode}{\tagsleft@true}
\newcommand{\reqnomode}{\tagsleft@false}
\newenvironment{enumroman}{\begin{enumerate}

}{\end{enumerate}}
\title[Fractional $p$-Laplacian weighted eigenvalues]{Monotonicity of eigenvalues of the fractional $p$-Laplacian\\ with singular weights}
\author[A.\ Iannizzotto]{Antonio Iannizzotto}
\address[A.\ Iannizzotto]{Dipartimento di Matematica e Informatica
\newline\indent
Universit\`a di Cagliari
\newline\indent
Via Ospedale 72, 09124 Cagliari, Italy}
\email{antonio.iannizzotto@unica.it}
\subjclass[2010]{35P30, 35R11.}
\keywords{Fractional $p$-Laplacian, Eigenvalue problems, Singular weights.}
\begin{document}

\begin{abstract}
We study a nonlinear, nonlocal eigenvalue problem driven by the fractional $p$-Laplacian with an indefinite, singular weight chosen in an optimal class. We prove the existence of an unbounded sequence of positive variational eigenvalues and alternative characterizations of the first and second eigenvalues. Then, by means of such characterizations, we prove strict decreasing monotonicity of such eigenvalues with respect to the weight function.
\end{abstract}

\maketitle

\begin{center}
Version of \today\
\end{center}

\section{Introduction}\label{sec1}

\noindent
The present note is devoted to the study of the following nonlinear, nonlocal, weighted eigenvalue problem:
\beq\label{ep}
\begin{cases}
\fpl u = \lambda m(x)|u|^{p-2}u & \text{in $\Omega$} \\
u = 0 & \text{in $\Omega^c$.}
\end{cases}
\eeq
The data of problem \eqref{ep} are as follows: $\Omega\subset\R^N$ ($N\ge 2$) is a bounded open domain, $p>1$, $s\in(0,1)$ are real numbers s.t.\ $ps<N$, and the leading operator is the fractional $p$-Laplacian, defined for any $u:\R^N\to\R$ smooth enough by
\[\fpl u(x) = 2\lim_{\eps\to 0^+}\int_{B_\eps^c(x)}\frac{|u(x)-u(y)|^{p-2}(u(x)-u(y))}{|x-y|^{N+ps}}\,dy.\]
Besides, $m$ is a possibly singular weight function of indefinite sign, chosen in the class
\[\mathcal{W}_{s,p} = \big\{m\in L^\frac{N}{ps}(\Omega):\,m>0 \ \text{on a subset of $\Omega$ with positive measure}\big\}.\]
Such choice of the weight function is a natural one. Indeed, the weak solutions of problem \eqref{ep} are elements of the fractional Sobolev space $W^{s,p}(\R^N)$, hence integrable up to the critical power $p^*_s=Np/(N-ps)$. So, for any $m\in L^\frac{N}{ps}(\Omega)$ and $u\in W^{s,p}(\R^N)$ the integral
\[\int_\Omega m(x)|u|^p\,dx\]
is finite. Clearly, our choice includes the most frequent case in the literature, namely $m\in L^\infty(\Omega)$. We say that $\lambda\in\R$ is an eigenvalue, if problem \eqref{ep} has a solution $u\neq 0$, which is then an eigenfunction associated to $\lambda$ (shortly, a $\lambda$-eigenfunction). 
\vskip2pt
\noindent
The study of eigenvalue problems for the fractional $p$-Laplacian (with constant weight $m=1$) started with \cite{LL}, where the authors were mainly concerned with the principal eigenvalue and its asymptotic behavior as $p\to\infty$. Then, several properties of the spectrum were investigated, such as simplicity of the first eigenvalue \cite{FP}, characterizations of the second one \cite{BP}, Weyl-type asymptotic laws \cite{IS}, calculations of the corresponding critical groups \cite{ILPS}.
\vskip2pt
\noindent
The purpose of this note is not only to extend some of the results above to the weighted case, but primarily to prove some useful monotonicity properties of the first and second eigenvalues with respect to the weight $m$. Our main result is the following:

\begin{theorem}\label{main}
Let $\Omega\subset\R^N$ ($N\ge 2$) be a bounded open domain, $p>1$, $s\in(0,1)$ s.t.\ $ps<N$. Then, for any $m\in\mathcal{W}_{s,p}$ problem \eqref{ep} admits an unbounded sequence of positive eigenvalues
\[0 < \lambda_1(m) < \lambda_2(m) \le \ldots \le \lambda_k(m) \le \ldots \to \infty.\]
In particular, $\lambda_1(m)$ is simple, isolated, and with constant sign associated eigenfunctions, while any eigenfunction associated to an eigenvalue $\lambda>\lambda_1(m)$ is nodal. Also, $\lambda_2(m)$ is the smallest eigenvalue above $\lambda_1(m)$. Finally, for any $m,\tilde m\in\mathcal{W}_{s,p}$ s.t.\ $m\le\tilde m$ in $\Omega$ the following hold:
\begin{enumroman}
\item\label{main1} $\lambda_k(m)\ge\lambda_k(\tilde m)$ for all $k\in\N$;
\item\label{main2} if $m\neq\tilde m$, then $\lambda_1(m) > \lambda_1(\tilde m)$;
\item\label{main3} if $m<\tilde m$ in $\Omega$, then $\lambda_2(m) > \lambda_2(\tilde m)$.
\end{enumroman}
\end{theorem}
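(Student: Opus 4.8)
The plan is to attach to problem~\eqref{ep} the usual variational spectrum built from the Rayleigh quotient $Q_m(u)=[u]_{s,p}^p/\int_\Omega m|u|^p\,dx$ on $\w$ (here $[\,\cdot\,]_{s,p}$ is the Gagliardo seminorm) and to read off every assertion from its min--max description. The fact that makes the singular class $\mathcal{W}_{s,p}$ tractable is that, by Hölder's inequality and the continuous embedding $\w\hookrightarrow L^{p^*_s}(\Omega)$, the functional $u\mapsto\int_\Omega m|u|^p\,dx$ is well defined on $\w$ and, writing $m=m_\eps+(m-m_\eps)$ with $m_\eps\in L^\infty(\Omega)$, $\|m-m_\eps\|_{N/(ps)}<\eps$, and invoking the compact embedding $\w\hookrightarrow L^p(\Omega)$, it is sequentially weakly continuous there. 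First I would introduce $\mathcal M_m=\{u\in\w:\int_\Omega m|u|^p\,dx=1\}$, which is nonempty and symmetric since $m>0$ on a set of positive measure, and which carries compact symmetric subsets of arbitrarily large Krasnoselskii genus: choosing $\phi_1,\dots,\phi_k\in C^\infty_c(\Omega)$ with pairwise disjoint supports and $\int_\Omega m|\phi_i|^p\,dx>0$ (possible by a Lebesgue-point argument near a density point of $\{m>0\}$), the unit sphere of $\mathrm{span}\{\phi_1,\dots,\phi_k\}$ rescaled onto $\mathcal M_m$ has genus $k$. Letting $\lambda_k(m)$ be the $k$-th Lusternik--Schnirelmann level of $J(u)=[u]_{s,p}^p$ on $\mathcal M_m$, the weak continuity of the weight together with the $(S_+)$-property of $\fpl$ yields the Palais--Smale condition for $J|_{\mathcal M_m}$ (along a bounded PS sequence the same $L^\infty$-truncation of $m$ plus the compact $L^p$-embedding give $\int_\Omega m|u_n|^{p-2}u_n(u_n-u)\,dx\to0$, hence $\langle\fpl u_n,u_n-u\rangle\to0$ and strong convergence); standard L--S theory then produces the unbounded nondecreasing sequence of eigenvalues $\lambda_k(m)$, positive by the fractional Sobolev inequality.

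Next I would record the qualitative properties. Since $\lambda_1(m)=\min\{Q_m(u):\int_\Omega m|u|^p\,dx>0\}$ and $[|u|]_{s,p}\le[u]_{s,p}$, minimizers are of constant sign and, by the strong maximum principle for $\fpl$, strictly positive (or negative) in $\Omega$; simplicity of $\lambda_1(m)$ and the nodality of every eigenfunction associated with $\lambda>\lambda_1(m)$ follow from the nonlocal Picone inequality exactly as in \cite{FP,BP}, the weight occurring on both sides of the inequality in the same way. Isolation of $\lambda_1(m)$ I would prove by contradiction: if $\lambda_n\downarrow\lambda_1(m)$ with normalized eigenfunctions $u_n$, the $(S_+)$-property gives a subsequence converging strongly to a $\lambda_1(m)$-eigenfunction, say w.l.o.g.\ a positive one, while testing the equation for $u_n$ with $u_n^-$ forces $\|m^+\|_{L^{N/(ps)}(\{u_n<0\})}\ge c>0$, contradicting $|\{u_n<0\}|\to0$ (from $u_n\to u>0$ in $L^{p^*_s}$, convergence in measure, and $u>0$ a.e.) together with absolute continuity of the measure $(m^+)^{N/(ps)}\,dx$. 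The same compactness shows the spectrum is closed, so $\lambda^*(m):=\inf\{\lambda>\lambda_1(m):\lambda\ \text{eigenvalue}\}$ is an eigenvalue; the identification $\lambda_2(m)=\lambda^*(m)>\lambda_1(m)$ I would obtain by transporting the mountain-pass / genus-$2$ characterization of \cite{BP} to the weighted setting, as it uses only the energy $[\,\cdot\,]_{s,p}^p$ and the compactness above. In particular, from any $\lambda_2(m)$-eigenfunction $v=v^+-v^-$ one gets a compact symmetric genus-$2$ set $K_0=\{av^++bv^-:(a,b)\in\s\}\subset\w\setminus\{0\}$ along which $Q_m\le\lambda_2(m)$ (so $\max_{K_0}Q_m=\lambda_2(m)$, attained at a multiple of $v$), the relevant estimates resting on $v^+v^-=0$ and $\langle\fpl v,v^\pm\rangle\ge[v^\pm]_{s,p}^p$; the latter also gives $\int_\Omega m|v^\pm|^p\,dx>0$, hence $\int_\Omega m|u|^p\,dx>0$ for every $u\in K_0$.

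For the monotonicity let $m\le\tilde m$ in $\Omega$ and use the equivalent form $\lambda_k(m)=\inf\{\max_{u\in K}Q_m(u)\}$, the infimum over compact symmetric sets $K$ of genus $\ge k$ contained in $\{u\in\w\setminus\{0\}:\int_\Omega m|u|^p\,dx>0\}$ (the map $u\mapsto u/(\int_\Omega m|u|^p\,dx)^{1/p}$ being an odd, genus- and $J$-preserving homeomorphism onto $\mathcal M_m$). Whenever $\int_\Omega m|u|^p\,dx>0$ one has $0<\int_\Omega m|u|^p\,dx\le\int_\Omega\tilde m|u|^p\,dx$, so every competitor for $\lambda_k(m)$ is a competitor for $\lambda_k(\tilde m)$ with $Q_{\tilde m}\le Q_m$ on it, which gives \ref{main1}. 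For \ref{main2}, let $v>0$ in $\Omega$ be a $\lambda_1(m)$-eigenfunction; then $\int_\Omega mv^p\,dx=[v]_{s,p}^p/\lambda_1(m)>0$ and, since $\tilde m-m\ge0$ with $\tilde m\not\equiv m$ while $v>0$ a.e., $\int_\Omega\tilde mv^p\,dx>\int_\Omega mv^p\,dx$, whence $\lambda_1(\tilde m)\le Q_{\tilde m}(v)<Q_m(v)=\lambda_1(m)$. For \ref{main3}, apply the analogue with the genus-$2$ set $K_0$ of a $\lambda_2(m)$-eigenfunction $v$: every $u\in K_0$ is nonzero, so $\int_\Omega m|u|^p\,dx>0$ and, using now $m<\tilde m$ a.e.\ in $\Omega$, $\int_\Omega\tilde m|u|^p\,dx>\int_\Omega m|u|^p\,dx$; thus $Q_{\tilde m}<Q_m$ pointwise on the compact set $K_0$, so $\max_{K_0}Q_{\tilde m}\le\max_{K_0}Q_m-\delta$ for some $\delta>0$, and since $\max_{K_0}Q_m=\lambda_2(m)$ we get $\lambda_2(\tilde m)\le\max_{K_0}Q_{\tilde m}<\lambda_2(m)$.

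The genuine obstacle is the second-eigenvalue part: carrying the characterization $\lambda_2(m)=\lambda^*(m)$ and the construction of the curve $K_0$ over from \cite{BP} to the present weighted, singular setting, where $\mathcal M_m$ is no longer a level set of a $C^1$ functional in the naive sense and the deformation flow must be kept inside $\{\int_\Omega m|u|^p\,dx>0\}$. The isolation estimate for $\lambda_1(m)$, which rests on absolute continuity of the $L^{N/(ps)}$-mass of $m^+$ over sets of vanishing measure rather than on boundedness of $m$, is the other place where the class $\mathcal{W}_{s,p}$, as opposed to classical $L^\infty$ weights, has to be handled with care.
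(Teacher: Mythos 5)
Your proposal is correct in substance and, at the global level, follows the same architecture as the paper: a symmetric min--max construction on the weighted sphere $\sm$ (you use Krasnoselskii's genus where the paper uses the Fadell--Rabinowitz index; the paper itself remarks these give the same $\lambda_1(m)$, $\lambda_2(m)$), the same compactness device (approximation of $m$ in $L^{N/(ps)}$ by regular weights plus the compact embedding $\w\hookrightarrow L^p(\Omega)$, i.e.\ Lemma \ref{cmp}), the same treatment of $\lambda_1(m)$ via hidden convexity/Picone and the strong maximum principle, and the same normalization-transport proof of \ref{main1} and \ref{main2}. You genuinely deviate in two places. First, you prove isolation of $\lambda_1(m)$ directly, testing the equations for eigenfunctions of $\lambda_n\downarrow\lambda_1(m)$ with their negative parts and using absolute continuity of the $L^{N/(ps)}$-mass of $m^+$ on sets of vanishing measure (the classical Anane/Lucia--Prashanth route \cite{A,LP}); the paper instead gets isolation as a by-product of Proposition \ref{scd}, namely $\lambda_1(m)<\lambda_2(m)$ together with $\lambda_2(m)$ being the least eigenvalue above $\lambda_1(m)$. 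Both work; yours is self-contained but needs the strong $\w$-convergence of normalized eigenfunctions, which you correctly extract from the $(S)_+$-property. Second, for \ref{main3} you replace the paper's argument (near-optimal odd maps $f_n$ plus the uniform constant $C(m,\tilde m,\lambda)>1$ of Proposition \ref{mos}, whose strict positivity again rests on Lemma \ref{cmp}) by the compact loop $K_0=\{av^++bv^-:\,(a,b)\in\s\}$ generated by an actual $\lambda_2(m)$-eigenfunction $v$, on which the Rayleigh quotient $Q_m$ is $\le\lambda_2(m)$ while $Q_{\tilde m}<Q_m$ pointwise, so compactness yields a strict gap; this is sound and arguably slicker, but note that the bound $Q_m\le\lambda_2(m)$ on all of $K_0$ is precisely the pointwise inequality \cite[Eq.\ (4.7)]{BP} used in \eqref{scd5}--\eqref{scd6}, i.e.\ more than the inequality $\langle\Phi'(v),v^\pm\rangle\ge p\|v^\pm\|^p$ you quote, so the work you defer to \cite{BP} here coincides with the core computation of Proposition \ref{scd}. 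Two further remarks: the identification of $\lambda_2$ with the smallest eigenvalue above $\lambda_1$, which you rightly single out as the main technical burden, is exactly what the paper carries out via the index-separation argument and Cuesta's minimax, and it does go through with Lemma \ref{cmp}; and your worry that the constraint set is ``no longer a level set of a $C^1$ functional'' is unfounded, since $\Psi_m\in C^1(\w)$ even for $m\in\mathcal{W}_{s,p}$ (H\"older plus the embedding into $L^{p^*_s}$), so the standard deformation theory on $C^1$ manifolds applies verbatim. Your explicit construction of genus-$k$ (disjoint-support) competitors with positive weighted mass is a welcome addition ensuring finiteness of the min--max levels, a point the paper passes over quickly.
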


\noindent
The sequence $(\lambda_k(m))$ is constructed by means of a standard min-max formula of Lusternik-Schnirelman type, based on the Fadell-Rabinowitz cohomological index, as in \cite{IS}. We note that such a sequence does not necessarily cover the whole spectrum of \eqref{ep}. The properties of $\lambda_1(m)$ and $\lambda_2(m)$ are proved essentially as in \cite{BP}. Also, we will characterize $\lambda_1(m)$ by means of a Rayleigh quotient, and $\lambda_2(m)$ by means of symmetric orbits on a weighted $L^p$-sphere (as was done in \cite{DR} for the local $p$-Laplacian and in \cite{BP} for the fractional $p$-Laplacian with $m=1$).
\vskip2pt
\noindent
Regarding the monotonicity properties \ref{main1}-\ref{main3}, we note that general non-strict monotonicity \ref{main1} easily follows from the general min-max formula. Instead, strict decreasing monotonicity is a more delicate matter. The analogues of \ref{main2}, \ref{main3} for the $p$-Laplacian were originally proved in \cite{A,AT}, respectively. The approach of \cite{AT} for the strict decreasing monotonicity of $\lambda_2(m)$ with respect to $m$ does not work here, due to the nonlocal behavior of the operator: roughly speaking, the reason is that if $u$ is a $\lambda_2(m)$-eigenfunction (hence nodal), then $u^\pm$ are {\em not} eigenfunctions on the mutual nodal domains. We follow a different method, based on the above-mentioned alternative characterization of $\lambda_2(m)$.
\vskip2pt
\noindent
In the linear case $p=2$, the sequence $(\lambda_k(m))$ can be equivalently defined by standard Courant-Fischer formulas and it covers the whole spectrum of the fractional Laplacian, see \cite{IP}. Strict decreasing monotonicity of $\lambda_k(m)$ with respect to $m$ (for any $k\in\N$) is then equivalent to some type of unique continuation property for the corresponding eigenfunctions, see \cite{FI}. 
\vskip2pt
\noindent
Theorem \ref{main} can be very useful in the study of general nonlinear equations of the type
\beq\label{np}
\begin{cases}
\fpl u = f(x,u) & \text{in $\Omega$} \\
u = 0 & \text{in $\Omega^c$,}
\end{cases}
\eeq
where $f:\Omega\times\R\to\R$ is a Carath\'eodory map with asymptotically $(p-1)$-linear behavior at infinity and/or at the origin, namely, s.t.\ the quotient
\[\frac{f(x,t)}{|t|^{p-2}t}\]
is bounded for $|t|$ big and/or small enough. Indeed, the existence of nontrivial weak solutions of problem \eqref{np} can be proved via topological methods (Browder's degree, index formula), by comparing the asymptotic behavior of the quotient above to a convenient weight function $m$ and exploiting the inequalities of Theorem \ref{main} \ref{main2}, \ref{main3}. An example of such application can be found in \cite{FI2}. Problems of the type \eqref{np} are also studied in \cite{FI1,ILPS,IL}.

\begin{remark}\label{ho}
In an independent line of research (see \cite{HPSS,HS}), the following class of weight functions was proposed: $m\in\widetilde{\mathcal{W}}_p$ if $m$ is a measurable function, s.t.\ $m{\rm d}_\Omega^{sa}\in L^r(\Omega)$, where ${\rm d}_\Omega(x)={\rm dist}(x,\Omega^c)$ and $a\in[0,1]$, $r>1$ are s.t.\
\[\frac{1}{r}+\frac{a}{p}+\frac{p-a}{p^*_s} < 1.\]
While $L^r(\Omega)\subset\widetilde{\mathcal{W}}_p$ for all $r>N/(ps)$, an easy calculation shows that $\mathcal{W}_{s,p}\not\subseteq\widetilde{\mathcal{W}}_p$. On the other hand, \cite[Example 2.3]{HS} shows that in general $\widetilde{\mathcal{W}}_p\not\subseteq\mathcal{W}_{s,p}$. We believe that the monotonicity properties of $\lambda_1(m)$, $\lambda_2(m)$ can be proved even for $m\in\widetilde{\mathcal{W}}_p$, using \cite[Lemma 2.6]{HS} in the place of Lemma \ref{cmp} below.
\end{remark}

\begin{remark}\label{low}
The assumption $ps<N$ is not an essential one: if $ps>N$, we believe that our arguments still hold with $m\in L^1(\Omega)$, and in the threshold case $ps=N$ with $m\in L^q(\Omega)$, for any $q>1$.
\end{remark}

\noindent
The structure of the paper is the following: in Section \ref{sec2} we recall some preliminary results on fractional Sobolev spaces; in Section \ref{sec3} we construct the sequence of variational eigenvalues and prove the characterizations of $\lambda_1(m)$, $\lambda_2(m)$; finally, in Section \ref{sec4} we prove the monotonicity properties of Theorem \ref{main}.
\vskip4pt
\noindent
{\bf Notation:} Throughout the paper, for any $A\subset\R^N$ we shall set $A^c=\R^N\setminus A$. For any two measurable functions $f,g:\Omega\to\R$, $f\le g$ in $\Omega$ will mean that $f(x)\le g(x)$ for a.e.\ $x\in\Omega$ (and similar expressions). The positive and negative parts of $f$ are $f^\pm=\max\{0,\pm f\}$, respectively. For all $q\in[1,\infty]$, $\|\cdot\|_q$ denotes the standard norm of $L^q(\Omega)$ (or $L^q(\R^N)$, which will be clear from the context). Every function $u$ defined in $\Omega$ will be identified with its $0$-extension to $\R^N$. Moreover, $C$ will denote a positive constant (whose value may change case by case).

\section{Preliminaries}\label{sec2}

\noindent
We recall some notions about fractional Sobolev spaces, referring the reader to \cite{BLP,DPV,ILPS} for details. Let $\Omega$, $s$, $p$ be as in Section \ref{sec1}. First, for all measurable $u:\R^n\to\R$ define the Gagliardo seminorm
\[[u]_{s,p} = \Big[\iint_{\R^N\times\R^N}\frac{|u(x)-u(y)|^p}{|x-y|^{N+ps}}\,dx\,dy\Big]^\frac{1}{p}.\]
We define the fractional Sobolev space
\[W^{s,p}(\R^N) = \big\{u\in L^p(\R^N):\,[u]_{s,p}<\infty\big\}.\]
We also define the subspace
\[\w = \big\{u\in W^{s,p}(\R^N):\,u=0 \ \text{in $\Omega^c$}\big\},\]
which is a uniformly convex (hence, reflexive) Banach space under the norm $\|u\|=[u]_{s,p}$, with dual space denoted by $W^{-s,p'}(\Omega)$. Set
\[p^*_s = \frac{Np}{N-ps},\]
then the embedding $\w\hookrightarrow L^q(\Omega)$ is continuous for all $q\in[1,p^*_s]$ and compact for all $q\in[1,p^*_s)$ (these properties are proved as in \cite[Lemma 2.4, Theorem 2.7]{BLP}).
\vskip2pt
\noindent
Now fix $m\in\mathcal{W}_{s,p}$. For all $\lambda\in\R$, we say that $u\in\w$ is a (weak) solution of problem \eqref{ep}, if for all $v\in\w$
\beq\label{wep}
\iint_{\R^N\times\R^N}\frac{|u(x)-u(y)|^{p-2}(u(x)-u(y))(v(x)-v(y))}{|x-y|^{N+ps}}\,dx\,dy = \lambda\int_\Omega m(x)|u|^{p-2}uv\,dx.
\eeq
Accordingly, we say that $\lambda\in\R$ is an eigenvalue of \eqref{ep}, if there exists a solution $u\in\w\setminus\{0\}$ of \eqref{ep}, which is then called an eigenfunction associated to $\lambda$ (or $\lambda$-eigenfunction). As pointed out in Section \ref{sec1}, the assumption $m\in\mathcal{W}_{s,p}$ makes equation \eqref{wep} well-posed. Indeed, from H\"older's inequality with three exponents we have
\[\int_\Omega\big|m(x)|u|^{p-2}uv\big|\,dx \le \|m\|_\frac{N}{ps}\|u\|_{p^*_s}^{p-1}\|v\|_{p^*_s} < \infty.\]
Nevertheless, the presence of the weight demands for some specific compactness results:

\begin{lemma}\label{cmp}
Let $(u_n)$ be a bounded sequence in $\w$, $m\in\mathcal{W}_{s,p}$. Then, there exists $u\in\w$ s.t.\ up to a subsequence
\begin{enumroman}
\item\label{cmp1} $\displaystyle\lim_n\int_\Omega m(x)|u_n|^p\,dx = \int_\Omega m(x)|u|^p\,dx$;
\item\label{cmp2} $\displaystyle\lim_n\int_\Omega m(x)|u_n-u|^p\,dx = 0$.
\end{enumroman}
\end{lemma}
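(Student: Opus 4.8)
The plan is to combine weak compactness in $\w$ with a truncation of the weight $m$ at level $k$, exploiting the fact that $L^{N/(ps)}(\Omega)$ is exactly the exponent needed to pair $|u|^p$ with $m$ through the critical embedding $\w\hookrightarrow L^{p^*_s}(\Omega)$. For $m\in L^\infty(\Omega)$ both statements would be immediate from the compact embedding into $L^p(\Omega)$; the whole difficulty lies in handling the singular part of $m$.

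First I would pass to a subsequence: since $(u_n)$ is bounded in the reflexive space $\w$, we have $u_n\rightharpoonup u$ in $\w$ for some $u\in\w$. As $p<p^*_s$ (because $ps<N$), the compact embedding $\w\hookrightarrow L^p(\Omega)$ gives $u_n\to u$ in $L^p(\Omega)$, hence, up to a further subsequence, $u_n\to u$ a.e.\ in $\Omega$ and $|u_n|^p\to|u|^p$ in $L^1(\Omega)$; by the continuous embedding $\w\hookrightarrow L^{p^*_s}(\Omega)$, the sequences $\|u_n\|_{p^*_s}$ and $\|u_n-u\|_{p^*_s}$ are bounded by some $C>0$.

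To prove \ref{cmp2} I would fix $\eps>0$ and split $m=m_k+m^k$, where $m_k=m\,\chi_{\{|m|\le k\}}\in L^\infty(\Omega)$ and $m^k=m\,\chi_{\{|m|>k\}}$. By dominated convergence $\|m^k\|_{N/(ps)}\to 0$ as $k\to\infty$, so I can fix $k$ so large that $C^p\|m^k\|_{N/(ps)}<\eps/2$; then, by H\"older's inequality exactly as in the well-posedness estimate for \eqref{wep},
\[\int_\Omega|m^k|\,|u_n-u|^p\,dx\le\|m^k\|_{N/(ps)}\,\|u_n-u\|_{p^*_s}^p<\frac{\eps}{2}\qquad\text{for all }n,\]
while $\int_\Omega|m_k|\,|u_n-u|^p\,dx\le k\,\|u_n-u\|_p^p\to 0$ by the $L^p$-convergence. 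Hence $\limsup_n\int_\Omega m\,|u_n-u|^p\,dx\le\eps$, and \ref{cmp2} follows by letting $\eps\to 0$.

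Part \ref{cmp1} would follow by the same truncation: $\big|\int_\Omega m^k\,(|u_n|^p-|u|^p)\,dx\big|\le\|m^k\|_{N/(ps)}\big(\|u_n\|_{p^*_s}^p+\|u\|_{p^*_s}^p\big)$ is uniformly small for $k$ large, and $\int_\Omega m_k\,|u_n|^p\,dx\to\int_\Omega m_k\,|u|^p\,dx$ since $m_k\in L^\infty(\Omega)$ and $|u_n|^p\to|u|^p$ in $L^1(\Omega)$; alternatively, \ref{cmp1} can be deduced directly from \ref{cmp2} via the elementary inequality $\big||a|^p-|b|^p\big|\le C_p\big(|a-b|^p+|a-b|\,|b|^{p-1}\big)$ together with Vitali's theorem, using the a.e.\ convergence and the uniform $L^{p^*_s}$ bound. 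The main obstacle, as noted, is precisely the singularity of $m$: the borderline integrability $m\in L^{N/(ps)}$ is used only to control the tails $\{|m|>k\}$ by H\"older against the critical exponent $p^*_s$, the absolute continuity of $\|\cdot\|_{N/(ps)}$ supplying uniformity in $n$.
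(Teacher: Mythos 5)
Your proof is correct, but it takes a genuinely different route from the paper's, at least for part \ref{cmp2}. For part \ref{cmp1} the two arguments are morally the same: the paper approximates $m$ in $L^{N/(ps)}(\Omega)$ by functions $m_k\in C^\infty_c(\Omega)$ and controls the error by H\"older against the $L^{p^*_s}$ bound, which is exactly the role your truncation $m_k=m\chi_{\{|m|\le k\}}$ plays (your version is if anything cleaner, since it needs no density theorem, only absolute continuity of the norm). The real divergence is in \ref{cmp2}: the paper splits $\Omega$ into $\Omega^\pm=\{\pm m>0\}$ and runs a Radon--Riesz type argument in the weighted spaces $L^p(\Omega^\pm,\pm m\,dx)$, using Clarkson's first or second inequality (hence a case distinction $p\ge 2$ versus $1<p<2$), weak convergence in $\w$, and part \ref{cmp1}; you instead obtain \ref{cmp2} directly from the strong convergence $u_n\to u$ in $L^p(\Omega)$ (compact embedding) plus the uniform smallness of the tail $\int_\Omega|m|\chi_{\{|m|>k\}}|u_n-u|^p\,dx\le C^p\|m\chi_{\{|m|>k\}}\|_{N/(ps)}$. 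This is more elementary, avoids uniform convexity entirely, proves the stronger statement with $|m|$ in place of $m$, and makes \ref{cmp2} logically independent of \ref{cmp1} (in the paper, \ref{cmp2} uses \ref{cmp1}); conversely, the paper's convexity argument is the one that would survive in situations where only weak convergence and convergence of the weighted norms are available, but here the compact embedding is used anyway, so nothing is lost. Two small cosmetic points: since $m$ changes sign, state your conclusion as $\limsup_n\int_\Omega|m|\,|u_n-u|^p\,dx\le\eps$ (which is what your estimates give) and then pass to the signed integral; and your alternative Vitali route for \ref{cmp1} needs the extra observation that $|m|\,|u|^{p-1}\in L^{(p^*_s)'}(\Omega)$ to get uniform integrability of $|m|\,|u_n-u|\,|u|^{p-1}$, though this is not needed since your primary truncation argument for \ref{cmp1} is already complete.
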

\begin{proof}
First we recall some elementary inequalities. By convexity, for all $a,b\ge 0$ we have
\beq\label{conv}
a^p+b^p \le 2^{p-1}(a+b)^p.
\eeq
Also, for all $q>0$ and all $a,b>0$ we have
\beq\label{lag}
|a^q-b^q| \le q\max\{a^{q-1},\,b^{q-1}\}|a-b|
\eeq
(if $q\ge 1$, then \eqref{lag} also holds if $a,b\ge 0$). By reflexivity of $\w$ and the compact embedding $\w\hookrightarrow L^p(\Omega)$, we can find $u\in\w$ and a subsequence, still denoted $(u_n)$, s.t.\ $u_n\rightharpoonup u$ in $\w$, $u_n\to u$ in $L^p(\Omega)$, and $u_n(x)\to u(x)$ for a.e.\ $x\in\Omega$.
\vskip2pt
\noindent
We now prove \ref{cmp1}. Since $C^\infty_c(\Omega)$ is a dense subset of $L^\frac{N}{ps}(\Omega)$, there exists a sequence $(m_k)$ in $C^\infty_c(\Omega)$ s.t.\ $m_k\to m$ in $L^\frac{N}{ps}(\Omega)$. Using \eqref{conv}, \eqref{lag} (with $q=p$), and H\"older's inequality, for all $n,k\in\N$ we have
\begin{align*}
\Big|\int_\Omega m(x)\big(|u_n|^p-|u|^p\big)\,dx\Big| &\le \Big|\int_\Omega(m(x)-m_k(x))\big(|u_n|^p-|u|^p\big)\,dx\Big|+\Big|\int_\Omega m_k(x)\big(|u_n|^p-|u|^p\big)\,dx\Big| \\
&\le \int_\Omega|m(x)-m_k(x)|\big||u_n|^p-|u|^p\big|\,dx+\int_\Omega|m_k(x)|\big||u_n|^p-|u|^p\big|\,dx \\
&\le \int_\Omega|m(x)-m_k(x)|\big(|u_n|^p+|u|^p\big)\,dx+p\int_\Omega|m_k(x)|\max\{|u_n|^{p-1},|u|^{p-1}\}|u_n-u|\,dx \\
&\le 2^{p-1}\int_\Omega|m(x)-m_k(x)|\big(|u_n|+|u|\big)^p\,dx+p\int_\Omega|m_k(x)|\big(|u_n|+|u|\big)^{p-1}|u_n-u|\,dx \\
&\le 2^{p-1}\|m-m_k\|_\frac{N}{ps}\big\|(|u_n|+|u|)\big\|_{p^*_s}^p+p\|m_k\|_\infty\big\|(|u_n|+|u|)\big\|_{p}^{p-1}\|u_n-u\| \\
&\le C\|m-m_k\|_\frac{N}{ps}+C\|m_k\|_\infty\|u_n-u\|_p,
\end{align*}
with $C>0$ independent of $n$, $k$. Passing to the limit as $n\to\infty$, we get for all $k\in\N$
\[\limsup_n\Big|\int_\Omega m(x)\big(|u_n|^p-|u|^p\big)\,dx\Big| \le C\|m-m_k\|_\frac{N}{ps}.\]
Finally, letting $k\to\infty$ we have
\[\lim_n\Big|\int_\Omega m(x)\big(|u_n|^p-|u|^p\big)\,dx\Big| = 0.\]
Next we prove \ref{cmp2}. Define the subdomains
\[\Omega^+ = \big\{x\in\Omega:\,m(x)>0\big\}, \ \Omega^- = \big\{x\in\Omega:\,m(x)<0\big\}.\]
Assume $p\ge 2$, then by applying Clarkson's first inequality to the space $L^p(\Omega^+,m(x)\,dx)$ we get for all $n\in\N$
\beq\label{cmp3}
\int_{\Omega^+}m(x)\Big|\frac{u_n+u}{2}\Big|^p\,dx+\int_{\Omega^+}m(x)\Big|\frac{u_n-u}{2}\Big|^p\,dx \le \frac{1}{2}\Big[\int_{\Omega^+}m(x)|u_n|^p\,dx+\int_{\Omega^+}m(x)|u|^p\,dx\Big].
\eeq
Besides, by convexity we have
\[\int_{\Omega^+}m(x)\Big|\frac{u_n+u}{2}\Big|^p\,dx \ge \int_{\Omega^+}m(x)|u|^p\,dx+\frac{p}{2}\int_{\Omega^+}m(x)|u|^{p-2}u(u_n-u)\,dx.\]
Since $m|u|^{p-2}u\in W^{-s,p'}(\Omega)$, by weak convergence we have
\[\int_{\Omega^+}m(x)|u|^{p-2}u(u_n-u)\,dx \to 0.\]
Using all these relations in \eqref{cmp3}, we have for all $n\in\N$
\begin{align*}
\frac{1}{2^p}\int_{\Omega^+}m(x)|u_n-u|^p\,dx &\le \frac{1}{2}\Big[\int_{\Omega^+}m(x)|u_n|^p\,dx+\int_{\Omega^+}m(x)|u|^p\,dx\Big]-\int_{\Omega^+}m(x)\Big|\frac{u_n+u}{2}\Big|^p\,dx \\
&\le \frac{1}{2}\int_{\Omega^+}m(x)|u_n|^p\,dx-\frac{1}{2}\int_{\Omega^+}m(x)|u|^p\,dx-\frac{p}{2}\int_{\Omega^+}m(x)|u|^{p-2}u(u_n-u)\,dx,
\end{align*}
and the latter goes to $0$ as $n\to\infty$, due to \ref{main1}. So
\[\int_{\Omega^+}m(x)|u_n-u|^p\,dx \to 0.\]
Similarly we have
\[\int_{\Omega^-}m(x)|u_n-u|^p\,dx \to 0.\]
The sum of the two limits above gives \ref{main2}. If $p\in(1,2)$, then we argue similarly using Clarkson's second inequality in the place of \eqref{cmp3}.
\end{proof}

\begin{remark}
An alternative functional-analytic framework can be given for problem \eqref{ep}, by replacing the space $\w$ with $\widetilde{W}^{s,p}_0(\Omega)$, defined as the completion of $C^\infty_c(\Omega)$ with respect to the norm $[u]_{s,p}$ (see for instance \cite{BP}). The main motivation of our choice of the space $\w$ is to easily connect the results of this paper with most recent works on nonlinear fractional equations (see for instance \cite{FI1,FI2,ILPS,IL}). Also, we note that we can work under no special assumptions on the boundary $\partial\Omega$, since we do not aim at precise regularity information on the eigenfunctions of \eqref{ep}.
\end{remark}

\section{Existence and characterization of eigenvalues}\label{sec3}

\noindent
In the present section we assume $m\in\mathcal{W}_{s,p}$ and we prove the existence and some properties of variational eigenvalues of \eqref{ep}. First, set for all $u\in\w$
\[\Phi(u) = \|u\|^p, \ \Psi_m(u) = \int_\Omega m(x)|u|^p\,dx.\]
It is easily seen that $\Phi\in C^1(\w)$ with derivative given by
\[\langle\Phi'(u),v\rangle = p\iint_{\R^N\times\R^N}\frac{|u(x)-u(y)|^{p-2}(u(x)-u(y))(v(x)-v(y))}{|x-y|^{N+ps}}\,dx\,dy.\]
The map $\Phi':\w\to W^{-s,p'}(\Omega)$ satisfies the $(S)_+$-property, i.e., whenever $(u_n)$ is a sequence in $\w$ s.t.\ $u_n\rightharpoonup u$ in $\w$ and
\[\limsup_n\langle\Phi'(u_n),u_n-u\rangle \le 0,\]
then $u_n\to u$ in $\w$ (see \cite[Lemma 2.1]{FI1}). Also, $\Psi_m\in C^1(\w)$ with
\[\langle\Psi'_m(u),v\rangle = p\int_\Omega m(x)|u|^{p-2}uv\,dx.\]
The only critical value of $\Psi_m$ is $0$. Indeed, if $u\in\w$ is a critical point of $\Psi_m$, then we have
\[0 = \langle\Psi'_m(u),u\rangle = p\Psi_m(u).\]
Clearly, for any $\lambda\in\R$, equation \eqref{wep} is equivalent to having in $W^{-s,p'}(\Omega)$
\[\Phi'(u)-\lambda\Psi'_m(u) = 0.\]
Since $1$ is a regular value for $\Psi_m$, we can define a $C^1$-manifold in $\w$ by setting
\[\sm = \big\{u\in\w:\,\Psi_m(u)=1\big\}.\]
We must now recall some topological notions. First we denote by $C_2(A,B)$ the set of odd, continuous maps from $A$ to $B$ (both denoting subsets of some topological vector spaces). For any closed subset $M$ of a topological vector space, s.t.\ $M=-M$, we denote by $i(M)\in\N\cup\{\infty\}$ the Fadell-Rabinowitz cohomological index, introduced in \cite{FR} (for general properties of $i(\cdot)$ we refer to \cite[Section 2.5]{PAO}), with $\Z_2$ as an acting group.
\vskip2pt
\noindent
For all $k\in\N$ set
\[\mathcal{F}_k = \big\{M\subseteq \sm:\,M=-M \ \text{closed, $i(M)\ge k$}\big\}.\]
Then we define a sequence of variational (Lusternik-Schnirelman) eigenvalues of \eqref{ep} by setting for all $k\in\N$
\beq\label{lk}
\lambda_k(m) = \inf_{M\in\mathcal{F}_k}\sup_{u\in M}\Phi(u).
\eeq
Indeed, we have the following result:

\begin{proposition}\label{exi}
For all $k\in\N$, $\lambda_k(m)$ defined by \eqref{lk} is an eigenvalue of \eqref{ep}. Also, $(\lambda_k(m))$ is a nondecreasing sequence and
\[\lim_k \lambda_k(m) = \infty.\]
\end{proposition}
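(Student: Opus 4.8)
The plan is to apply the abstract Lusternik--Schnirelman theory for even functionals on $C^1$-manifolds with the Fadell--Rabinowitz index, in the form tailored to the pair $(\Phi,\Psi_m)$ on the manifold $\sm$. First I would set up the variational framework precisely: the restriction $\restr{\Phi}{\sm}$ is even, of class $C^1$, and bounded below (indeed $\Phi\ge 0$). The critical points of $\restr{\Phi}{\sm}$ are exactly the (normalized) eigenfunctions of \eqref{ep}: by the Lagrange multiplier rule, if $u\in\sm$ is a critical point of $\restr{\Phi}{\sm}$ then $\Phi'(u)=\mu\,\Psi_m'(u)$ in $W^{-s,p'}(\Omega)$ for some $\mu\in\R$, and testing with $u$ gives $p\Phi(u)=\mu\,p\,\Psi_m(u)=\mu$, so $\mu=\|u\|^p>0$ (note $u\neq 0$ since $\Psi_m(u)=1$); hence $u$ is an eigenfunction with eigenvalue $\lambda=\mu=\Phi(u)>0$. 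Conversely any normalized eigenfunction is such a critical point. This already shows the $\lambda_k(m)$ are positive, provided they are critical values.

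The core analytic step is the Palais--Smale condition for $\restr{\Phi}{\sm}$ at every level $c>0$. I would take a sequence $(u_n)\subset\sm$ with $\Phi(u_n)\to c$ and $\|(\restr{\Phi}{\sm})'(u_n)\|\to 0$; the latter means $\Phi'(u_n)-\mu_n\Psi_m'(u_n)\to 0$ in $W^{-s,p'}(\Omega)$, where $\mu_n=\langle\Phi'(u_n),u_n\rangle/\langle\Psi_m'(u_n),u_n\rangle=p\Phi(u_n)/p=\Phi(u_n)\to c$. Since $\Phi(u_n)\to c$, the sequence $(u_n)$ is bounded in $\w$, so up to a subsequence $u_n\rightharpoonup u$ in $\w$, and by Lemma~\ref{cmp} we may also assume $\Psi_m(u_n-u)\to 0$ and $\Psi_m(u_n)\to\Psi_m(u)$, so $\Psi_m(u)=1$ and in particular $u\neq 0$. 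Testing the approximate equation against $u_n-u$ gives
\[\langle\Phi'(u_n),u_n-u\rangle = \mu_n\langle\Psi_m'(u_n),u_n-u\rangle+o(1) = p\,\mu_n\int_\Omega m(x)|u_n|^{p-2}u_n(u_n-u)\,dx+o(1).\]
By Hölder's inequality with three exponents $\frac{N}{ps},\ \frac{p^*_s}{p-1},\ p^*_s$ applied to the last integral, and the compact embedding $\w\hookrightarrow L^p(\Omega)$ plus Lemma~\ref{cmp}\ref{cmp2}, one checks this integral tends to $0$ (write it as $\int_\Omega m\,|u_n|^{p-2}u_n(u_n-u)$, bound $|u_n|^{p-1}$ in $L^{p^*_s/(p-1)}$ uniformly and $\|u_n-u\|$ in $L^{p^*_s}$; alternatively split $m=m-m_k+m_k$ as in the proof of Lemma~\ref{cmp} to reduce to $\|u_n-u\|_p\to 0$). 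Hence $\limsup_n\langle\Phi'(u_n),u_n-u\rangle\le 0$, and the $(S)_+$-property of $\Phi'$ yields $u_n\to u$ in $\w$, so $u\in\sm$ and the Palais--Smale condition holds at level $c$.

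With PS in hand, the standard deformation lemma for even $C^1$-functionals on symmetric $C^1$-manifolds, combined with the monotonicity, subadditivity and dimension properties of the Fadell--Rabinowitz index $i(\cdot)$ (see \cite[Section 2.5]{PAO}), gives the classical min-max principle: each $\lambda_k(m)$ defined by \eqref{lk} is a critical value of $\restr{\Phi}{\sm}$, hence an eigenvalue of \eqref{ep}; monotonicity $\lambda_k(m)\le\lambda_{k+1}(m)$ is immediate since $\mathcal{F}_{k+1}\subseteq\mathcal{F}_k$. For the divergence $\lambda_k(m)\to\infty$, I would argue by contradiction: if $\sup_k\lambda_k(m)=:\lambda_\infty<\infty$, then for each $k$ there is $M_k\in\mathcal{F}_k$ with $\sup_{u\in M_k}\Phi(u)\le\lambda_\infty+1$, so $M_k$ lies in the bounded set $\{u\in\sm:\ \|u\|^p\le\lambda_\infty+1\}$; using the PS condition at all levels in $(0,\lambda_\infty+1]$ (there are only finitely many critical values there and each critical set is compact, so the critical set $K$ in this range has $i(K)<\infty$), and the continuity/excision property of the index on a neighbourhood of $K$ inside $\sm$, one derives a uniform bound $i(M_k)\le i(K)+$const, contradicting $i(M_k)\ge k$ for large $k$ — this is the usual proof that a compact symmetric manifold of the relevant type carries only finitely many L--S levels below a given bound when PS holds. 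The main obstacle is precisely this last step: verifying that the sublevel sets of $\restr{\Phi}{\sm}$ interact well with the index (compactness of critical sets, validity of the deformation lemma on the manifold $\sm$ rather than on all of $\w$, and finiteness of $i$ on a neighbourhood of a compact critical set). This is handled exactly as in \cite{IS}, to which I would refer for the routine topological details, the only new ingredient here being the PS verification above, which rests on Lemma~\ref{cmp} and the $(S)_+$-property of $\Phi'$.
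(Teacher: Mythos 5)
Your existence and monotonicity arguments are essentially the paper's: the Palais--Smale verification through Lemma \ref{cmp} and the $(S)_+$-property of $\Phi'$, the deformation theorem on the $C^1$-manifold $\sm$ combined with the monotonicity of the index to show that each $\lambda_k(m)$ is a critical value, the Lagrange multiplier computation giving $\mu=\Phi(u)>0$, and the inclusion $\mathcal{F}_{k+1}\subseteq\mathcal{F}_k$ for monotonicity all match the paper's proof. One small repair in the PS step: the three-exponent H\"older estimate you propose bounds $\int_\Omega|m||u_n|^{p-1}|u_n-u|\,dx$ by $\|m\|_{N/ps}\|u_n\|_{p^*_s}^{p-1}\|u_n-u\|_{p^*_s}$, and $\|u_n-u\|_{p^*_s}$ does \emph{not} tend to $0$ (the embedding is not compact at the critical exponent), so that route alone proves nothing; you need either your alternative splitting $m=(m-m_k)+m_k$, or, as the paper does, H\"older with exponents $p'$, $p$ with respect to the measure $|m(x)|\,dx$ followed by Lemma \ref{cmp} \ref{cmp1}--\ref{cmp2} applied with the weight $|m|$.

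The genuine gap is in the divergence step. The parenthetical claim that there are ``only finitely many critical values in $(0,\lambda_\infty+1]$'' is unjustified: for $p\neq 2$ it is not known that the set of eigenvalues below a fixed bound is finite, and invoking such discreteness here is close to circular, since the divergence of the variational eigenvalues is precisely what is being proved. Fortunately it is also unnecessary: PS alone makes the set of critical points with values in a compact interval compact. More seriously, the asserted ``uniform bound $i(M_k)\le i(K)+\mathrm{const}$'' does not follow from continuity/excision of the index in the way you suggest: the descending flow may re-enter the neighbourhood $N$ of $K$, and you cannot deform $\{u\in\sm:\,\Phi(u)\le\lambda_\infty+1\}\setminus\mathrm{int}\,N$ below the minimum level $\lambda_1(m)$, so no bound on $i(M_k)$ comes out of this. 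The standard repair works at the single limit level: set $\lambda_\infty=\lim_k\lambda_k(m)$, let $K_\infty$ be the (compact, by PS) critical set at level $\lambda_\infty$, choose a closed symmetric neighbourhood $N$ with $i(N)=i(K_\infty)=:n<\infty$, apply the deformation theorem to get $\eps>0$ and an odd map $\eta$ with $\eta\big(\{\Phi\le\lambda_\infty+\eps\}\setminus\mathrm{int}\,N\big)\subseteq\{\Phi\le\lambda_\infty-\eps\}$, pick $k$ with $\lambda_k(m)>\lambda_\infty-\eps$ and $M\in\mathcal{F}_{k+n}$ with $\sup_M\Phi\le\lambda_\infty+\eps$; then $i(\overline{M\setminus N})\ge i(M)-i(N)\ge k$, so $\eta(\overline{M\setminus N})\in\mathcal{F}_k$ while $\sup_{\eta(\overline{M\setminus N})}\Phi\le\lambda_\infty-\eps<\lambda_k(m)$, a contradiction with \eqref{lk}. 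The paper itself takes a shorter route, observing that $i(\sm)=\infty$ and $\sup_{u\in\sm}\Phi(u)=\infty$ and referring to the argument of \cite[Proposition 2.2]{IS}; either way, the divergence requires one of these precise arguments rather than the sketch you give.
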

\begin{proof}
Our argument follows that of \cite[Proposition 2.2]{IS}. Let $\tilde\Phi$ denote the restriction of $\Phi$ to $\sm$. First, we prove that $\tilde\Phi$ satisfies the Palais-Smale condition. Let $(u_n)$ be a sequence in $\sm$ with the following properties: $\Phi(u_n)\to c\in\R$, and there exists a sequence $(\mu_n)$ in $\R$ s.t.\ in $W^{-s,p'}(\Omega)$
\beq\label{exi1}
\Phi'(u_n)-\mu_n\Psi'_m(u_n) \to 0.
\eeq
First, we note that $(u_n)$ is bounded in $\w$, hence passing to a subsequence we have $u_n\rightharpoonup u$ in $\w$. By Lemma \ref{cmp} \ref{cmp1}, passing to a further subsequence we have
\[\Psi_m(u) = \lim_n\Psi_m(u_n) = 1,\]
hence $u\in\sm$. Testing \eqref{exi1} with $u_n$ we get
\[\Phi(u_n)-\mu_n\Psi_m(u_n) \to 0,\]
hence $\mu_n\to c$ (in particular, $(\mu_n)$ is bounded). Further, testing \eqref{exi1} with $u_n-u$, using the boundedness of $(\mu_n)$, and H\"older's inequality, we get
\begin{align*}
\langle\Phi'(u_n),u_n-u\rangle &= p\mu_n\int_\Omega m(x)|u_n|^{p-2}u_n(u_n-u)\,dx+{\bf o}(1) \\
&\le C\Big[\int_\Omega |m(x)||u_n|^p\,dx\Big]^\frac{1}{p'}\Big[\int_\Omega |m(x)||u_n-u|^p\,dx\Big]^\frac{1}{p}+{\bf o}(1),
\end{align*}
and the latter tends to $0$ as $n\to\infty$ by Lemma \ref{cmp} \ref{cmp1} \ref{cmp2} (with weight $|m|$). By the $(S)_+$-property of $\Phi'$, we conclude that $u_n\to u$ in $\w$.
\vskip2pt
\noindent
Now fix $k\in\N$ and let $\lambda_k(m)$ be defined by \eqref{lk}. We prove that $\lambda_k(m)$ is a critical value of $\tilde\Phi$, arguing by contradiction: assume that $\lambda_k(m)$ is a regular value of $\tilde\Phi$. By the deformation theorem on $C^1$-manifolds (see \cite[Theorem 2.5]{B}), there exist $\eps>0$ and an odd homeomorphism $\eta:\sm\to\sm$ s.t.\ for all $u\in\sm$ the following implication holds:
\[\Phi(u)\le\lambda_k(m)+\eps \ \Longrightarrow \ \Phi(\eta(u))\le\lambda_k(m)-\eps.\]
By \eqref{lk}, there exists $M\in\mathcal{F}_k$ s.t.\
\[\sup_{u\in M}\Phi(u) \le \lambda_k(m)+\eps.\]
By the monotonicity property of the index (see \cite[Proposition 2.12 $(i_2)$]{PAO}), we have $i(\eta(M))\ge k$, hence $\eta(M)\in\mathcal{F}_k$. From the relations above we deduce
\[\sup_{u\in\eta(M)}\Phi(u) \le \lambda_k(m)-\eps,\]
against \eqref{lk}. So there exist $u\in\sm$, $\mu\in\R$ s.t.\ $\Phi(u)=\lambda_k(m)$ and in $W^{-s,p'}(\Omega)$
\[\Phi'(u)-\mu\Psi'_m(u) = 0.\]
Testing with $u$ we get
\[\lambda_k(m) = \|u\|^p = \mu\int_\Omega m(x)|u|^p\,dx = \mu.\]
Thus, as noted before, $u\in\sm$ satisfies \eqref{wep} with $\lambda=\lambda_k(m)>0$, i.e., $\lambda_k(m)$ is a positive eigenvalue with (normalized) associated eigenfunction $u$.
\vskip2pt
\noindent
To conclude the proof, we note that for all $k\in\N$ we have $\mathcal{F}_k\supseteq\mathcal{F}_{k+1}$, hence from \eqref{lk} we have $\lambda_k(m)\le\lambda_{k+1}(m)$. Finally, since ${\rm dim}(\w)=\infty$, by the neighborhood property of $i(\cdot)$ (see \cite[Proposition 2.12 $(i_8)$]{PAO}) we have $i(\sm)=\infty$. Besides,
\[\sup_{u\in\sm}\Phi(u) = \infty,\]
so we have $\lambda_k(m)\to\infty$ as $k\to\infty$.
\end{proof}

\begin{remark}
An analogous construction, performed replacing the index $i(\cdot)$ with Krasnoselskii's genus (see \cite[Definition 5.4.25]{GP} and the subsequent properties), leads to a possibly different sequence of eigenvalues. But, notably, the first two eigenvalues $\lambda_1(m)$, $\lambda_2(m)$ remain the same, due to the following results.
\end{remark}

\noindent
Now we prove further properties and alternative variational characterizations of the first two eigenvalues $\lambda_1(m)$, $\lambda_2(m)$, which will be used in Section \ref{sec4} to show monotonicity with respect to $m$. The following results are 'weighted' versions of those of \cite[Section 4]{BP}. We begin with the first eigenvalue:

\begin{proposition}\label{first}
The smallest positive eigenvalue of \eqref{ep} is simple, given by
\[\lambda_1(m) = \min_{u\in\sm}\|u\|^p > 0.\]
Also, for any eigenvalue $\lambda>0$ and any $\lambda$-eigenfunction $u\in\w$, the following hold:
\begin{enumroman}
\item\label{first1} if $\lambda=\lambda_1(m)$, then either $u>0$ in $\Omega$, or $u<0$ in $\Omega$;
\item\label{first2} if $\lambda>\lambda_1(m)$, then $u$ is nodal.
\end{enumroman}
Fro now on, we denote by $e_1\in\sm$ the unique positive, normalized $\lambda_1(m)$-eigenfunction.
\end{proposition}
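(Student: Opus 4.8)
The plan is to follow \cite[Section 4]{BP}, feeding in the weighted compactness of Lemma \ref{cmp} and, where positivity is invoked, a strong maximum principle with a singular potential. First I would identify $\lambda_1(m)$ with $\min_{u\in\sm}\Phi(u)$: for $k=1$ the family $\mathcal F_1$ contains every antipodal pair $\{\pm u\}\subset\sm$, which has Fadell--Rabinowitz index $1$, so \eqref{lk} gives $\lambda_1(m)\le\inf_{u\in\sm}\Phi(u)$, while conversely every $M\in\mathcal F_1$ is nonempty, whence $\sup_M\Phi\ge\inf_{\sm}\Phi$ and the reverse inequality follows. The infimum is attained: along a minimizing sequence $(u_n)\subset\sm$, bounded in $\w$, pass to $u_n\rightharpoonup u$; by Lemma \ref{cmp}\ref{cmp1} we have $\Psi_m(u)=\lim_n\Psi_m(u_n)=1$, hence $u\in\sm$, and weak lower semicontinuity of the norm gives $\|u\|^p\le\liminf_n\|u_n\|^p=\lambda_1(m)$, so $u$ is a minimizer with $\lambda_1(m)=\|u\|^p>0$. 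Since $1$ is a regular value of $\Psi_m$, the Lagrange multiplier rule makes $u$ a $\lambda_1(m)$-eigenfunction. Finally, if $\lambda>0$ is any eigenvalue with eigenfunction $v$, testing \eqref{wep} with $v$ yields $\|v\|^p=\lambda\Psi_m(v)>0$, so $\Psi_m(v)^{-1/p}v\in\sm$ and $\lambda=\|\Psi_m(v)^{-1/p}v\|^p\ge\lambda_1(m)$; thus $\lambda_1(m)$ is the smallest positive eigenvalue.

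\emph{Constant sign.} Let $u$ be a $\lambda_1(m)$-eigenfunction; after dividing by $\Psi_m(u)^{1/p}=(\|u\|^p/\lambda_1(m))^{1/p}$ we may assume $u\in\sm$. From $\big||u(x)|-|u(y)|\big|\le|u(x)-u(y)|$ we get $\||u|\|\le\|u\|$ and $\Psi_m(|u|)=\Psi_m(u)=1$, so $|u|\in\sm$ is again a minimizer, hence a nonnegative $\lambda_1(m)$-eigenfunction; it solves $\fpl|u|+\lambda_1(m)\,m^-\,|u|^{p-1}=\lambda_1(m)\,m^+\,|u|^{p-1}\ge0$ weakly, with $m^+,m^-\in L^{N/(ps)}(\Omega)$, so a strong maximum principle for $\fpl$ with a singular potential gives $|u|>0$ in $\Omega$. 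On the other hand $\||u|\|=\|u\|$ forces $\big||u(x)|-|u(y)|\big|=|u(x)-u(y)|$ for a.e.\ $(x,y)\in\R^N\times\R^N$, i.e.\ $u(x)u(y)\ge0$; combined with $|u|>0$ in $\Omega$ this yields $u>0$ in $\Omega$ or $u<0$ in $\Omega$, which is \ref{first1}.

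\emph{Simplicity and \ref{first2}.} Both rest on the nonlocal Picone inequality: for measurable $a,b\ge0$ and $c,d>0$,
\[
|c-d|^{p-2}(c-d)\Big(\frac{a^p}{c^{p-1}}-\frac{b^p}{d^{p-1}}\Big)\le|a-b|^p,
\]
with equality if and only if $a/c=b/d$. For simplicity, let $u_1,u_2\in\sm$ be positive $\lambda_1(m)$-eigenfunctions; inserting $a=u_2$, $c=u_1$ and integrating against $|x-y|^{-N-ps}\,dx\,dy$, the left-hand side becomes, by \eqref{wep} for $u_1$ tested with $u_2^p/u_1^{p-1}$, equal to $\lambda_1(m)\int_\Omega m\,u_2^p\,dx=\lambda_1(m)$, while the right-hand side is $[u_2]_{s,p}^p=\|u_2\|^p=\lambda_1(m)$; equality in Picone then forces $u_2/u_1$ to be a.e.\ constant on $\Omega$, and the constant is $1$ since $u_1,u_2\in\sm$. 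In particular the positive normalized $\lambda_1(m)$-eigenfunction $e_1$ is well defined. For \ref{first2}, suppose $\lambda>\lambda_1(m)$ admits an eigenfunction $u$ of constant sign, say $u>0$ in $\Omega$ (by the maximum principle as above); inserting $a=e_1$, $c=u$ and using \eqref{wep} for $u$ tested with $e_1^p/u^{p-1}$, the left-hand side integrates to $\lambda\int_\Omega m\,e_1^p\,dx=\lambda$ and the right-hand side to $[e_1]_{s,p}^p=\lambda_1(m)$, so $\lambda\le\lambda_1(m)$ — a contradiction. Hence every eigenfunction associated to an eigenvalue above $\lambda_1(m)$ is nodal.

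\emph{Main obstacle.} The two genuinely delicate points — both already settled for the constant weight in \cite{FP,BP} — are: (i) justifying the singular test functions $u_2^p/u_1^{p-1}$ and $e_1^p/u^{p-1}$ in \eqref{wep}, which needs two positive eigenfunctions to be comparable near $\partial\Omega$ and is obtained from boundary regularity together with an $\eps$-truncation of Picone (replacing the denominators $u_1$, $u$ by $u_1+\eps$, $u+\eps$ and letting $\eps\to0^+$); and (ii) the strong maximum principle for $\fpl$ with an $L^{N/(ps)}$-potential, which is precisely where the admissible class $\mathcal W_{s,p}$ enters. Granting these, the only new ingredient relative to \cite{BP} is the weighted compactness of Lemma \ref{cmp}.
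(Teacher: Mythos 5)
Your identification of $\lambda_1(m)$ with $\min_{u\in\sm}\|u\|^p$, the attainment argument via Lemma \ref{cmp} and weak lower semicontinuity, and the constant-sign argument for \ref{first1} all match the paper's proof (which invokes \cite[Proposition B.3]{BSY} as the strong maximum principle with the singular potential $-\lambda_1(m)m$). The genuine problem is your treatment of simplicity. You run Picone with the test function $u_2^p/u_1^{p-1}$ and propose to justify it by ``boundary regularity together with comparability of the two eigenfunctions near $\partial\Omega$''. Neither tool is available in this setting: $\Omega$ is only a bounded open set with no assumed boundary regularity, and with a weight $m\in L^{N/(ps)}(\Omega)$ eigenfunctions need not even be bounded (the paper remarks this explicitly, citing \cite[Example 2.2]{LP} for the local analogue), so there is no Hopf-type boundary behaviour with which to compare $u_1$ and $u_2$. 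The $\eps$-truncation you mention does not rescue simplicity either: note first that one must truncate the numerator as well (as the paper does in its proof of \ref{first2}, taking $v_\eps=\min\{e_1,1/\eps\}$), since $e_1^p$ need not belong to $\w$ when $e_1$ is unbounded; and, more seriously, after truncation one only obtains the inequality side of Picone in the limit $\eps\to0^+$ via Fatou. That suffices for \ref{first2}, which is a contradiction argument needing only $\lambda\le\lambda_1(m)$, but simplicity requires the \emph{equality case} of Picone pointwise, and at fixed $\eps$ you never have an exact equality from which to extract that $u_2/u_1$ is a.e.\ constant.

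The paper sidesteps this entirely: simplicity is proved by hidden (geodesic) convexity, i.e.\ by the inequality $\big\|[(1-t)u^p+tv^p]^{1/p}\big\|^p\le(1-t)\|u\|^p+t\|v\|^p$ of \cite[Lemma 4.1]{FP} applied to two positive normalized $\lambda_1(m)$-eigenfunctions $u,v$. Since $w_t=[(1-t)u^p+tv^p]^{1/p}$ remains on $\sm$, minimality forces exact equality for every $t$, and the equality case of the $\ell^p$ triangle inequality applied to the integrands yields $u(x)/u(y)=v(x)/v(y)$ for a.e.\ $x,y$, hence $u=v$ after normalization; no regularity and no singular test functions are needed. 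You should replace your Picone simplicity argument by this one (or else supply a proof of the equality-case identification compatible with truncation). Your argument for \ref{first2} is sound once you adopt the full truncation $w_\eps=v_\eps^p/(u+\eps)^{p-1}$ with $v_\eps=\min\{e_1,1/\eps\}$ and verify $w_\eps\in\w$ directly through the elementary quotient estimates, exactly as in the paper.
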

\begin{proof}
Let $\lambda_1(m)$ be defined by \eqref{lk} with $k=1$. By Proposition \ref{exi}, $\lambda_1(m)$ is an eigenvalue, with an associated eigenfunction $u\in\sm$ s.t.\
\[\lambda_1(m) = \Phi(u) > 0.\]
For any $v\in\sm$ set $M=\{v,\,-v\}$, then by the definiteness property of $i(\cdot)$ (see \cite[Proposition 2.12 $(i_1)$]{PAO}) we have $M\in\mathcal{F}_1$ with
\[\Phi(v) = \Phi(-v) \ge \lambda_1(m).\]
Taking the infimum over $v\in\sm$, we prove the characterization of $\lambda_1(m)$. Now let $\lambda>0$ be an eigenvalue, with associated eigenfunction $v\in\sm$, then
\[\lambda = \|v\|^p \ge \lambda_1(m).\]
So, $\lambda_1(m)$ is the smallest positive eigenvalue.
\vskip2pt
\noindent
Now we prove \ref{first1}. Without loss of generality, let $u\in\sm$ be a $\lambda_1(m)$-eigenfunction s.t.\ $u^+\neq 0$. Then $|u|\in\sm$ and
\begin{align*}
\Phi(|u|) &= \iint_{\R^N\times\R^N}\frac{||u(x)|-|u(y)||^p}{|x-y|^{N+ps}}\,dx\,dy \\
&\le \iint_{\R^N\times\R^N}\frac{|u(x)-u(y)|^p}{|x-y|^{N+ps}}\,dx\,dy = \lambda_1(m),
\end{align*}
with strict inequality if $u$ changes sign in $\Omega$. By the characterization of $\lambda_1(m)$, we deduce that $u\ge 0$ in $\Omega$. Further, by the strong maximum principle \cite[Proposition B.3]{BSY} (with $a=-\lambda_1(m)m$, $\mu=0$) we have $u>0$ in $\Omega$. Similarly, if $u^-\neq 0$ we deduce $u<0$ in $\Omega$.
\vskip2pt
\noindent
Next we prove that $\lambda_1(m)$ is simple. Without loss of generality, let $u,v\in\sm$ be $\lambda_1(m)$-eigenfunctions s.t.\ $u,v>0$ in $\Omega$. Our argument is based on 'geodesic convexity': set for all $t\in[0,1]$
\[w_t = \big[(1-t)u^p+tv^p\big]^\frac{1}{p}.\]
We have $w_t\in\w$. Indeed, by \cite[Lemma 4.1]{FP}
\beq\label{first3}
\|w_t\|^p \le (1-t)\|u\|^p+t\|v\|^p = \lambda_1(m).
\eeq
Besides,
\[\int_\Omega m(x)w_t^p\,dx = (1-t)\int_\Omega m(x)u^p\,dx+t\int_\Omega m(x)v^p\,dx = 1,\]
hence $w_t\in\sm$. By the characterization we have $\|w_t\|^p\ge \lambda_1(m)$, which along with \eqref{first3} gives $\|w_t\|^p=\lambda_1(m)$. So, \eqref{first3} rephrases as an equality and we have
\begin{align}\label{first4}
&\iint_{\R^N\times\R^N}\Big|\big[(1-t)u^p(x)+tv^p(x)\big]^\frac{1}{p}-\big[(1-t)u^p(y)+tv^p(y)\big]^\frac{1}{p}\Big|^p\frac{dx\,dy}{|x-y|^{N+ps}} \\
\nonumber &= \iint_{\R^N\times\R^N}\big[(1-t)|u(x)-u(y)|^p+t|v(x)-v(y)|^p\big]\frac{dx\,dy}{|x-y|^{N+ps}} \ (= \lambda_1(m)).
\end{align}
We focus on the integrands of \eqref{first4}, setting for all $x,y\in\R^N$
\[\xi = \big((1-t)^\frac{1}{p}u(x),\,t^\frac{1}{p}v(x)\big), \ \eta = \big((1-t)^\frac{1}{p}u(y),\,t^\frac{1}{p}v(y)\big).\]
By the triangular inequality in $\ell^p$, we have
\begin{align*}
0 &\le \Big|\big[(1-t)u^p(x)+tv^p(x)\big]^\frac{1}{p}-\big[(1-t)u^p(y)+tv^p(y)\big]^\frac{1}{p}\Big|^p \\
&= \big|\|\xi\|_{\ell^p}-\|\eta\|_{\ell^p}\big|^p \\
&\le \|\xi-\eta\|_{\ell^p}^p \\
&= \big[(1-t)|u(x)-u(y)|^p+t|v(x)-v(y)|^p\big].
\end{align*}
So, the integrands in \eqref{first4} coincide in $\R^N\times\R^N$. This, in turn, implies that $\xi=\alpha\eta$ for some $\alpha\in\R$, i.e., for a.e.\ $x,y\in\R^N$ we have
\[\frac{u(x)}{u(y)} = \frac{v(x)}{v(y)} \ (=\alpha).\]
Recalling that $u,v\in\sm$ we see that $\alpha=1$, hence $u=v$.
\vskip2pt
\noindent
Finally we prove \ref{first2}. Let $\lambda>\lambda_1(m)$ be an eigenvalue and assume, by contradiction, that there exists a $\lambda$-eigenfunction $u\in\sm$ s.t.\ $u\ge 0$ in $\Omega$. Again by \cite[Proposition B.3]{BSY}, we have in fact $u>0$ in $\Omega$. Besides, let $e_1\in\sm$ be defined as above. For any $\eps>0$ set
\[v_\eps = \min\Big\{e_1,\,\frac{1}{\eps}\Big\}.\]
We claim that $v_\eps\in\w$ with $\|v_\eps\|^p\le\lambda_1(m)$. Indeed, clearly $v_\eps\in L^\infty(\R^N)$ and $v_\eps=0$ in $\Omega^c$, so $v_\eps\in L^p(\R^N)$. Also, since the map
\[t\mapsto\min\Big\{t,\,\frac{1}{\eps}\Big\}\]
is $1$-Lipschitz continuous, we have
\[\iint_{\R^N\times\R^N}\frac{|v_\eps(x)-v_\eps(y)|^p}{|x-y|^{N+ps}}\,dx\,dy \le \iint_{\R^N\times\R^N}\frac{|e_1(x)-e_1(y)|^p}{|x-y|^{N+ps}}\,dx\,dy = \lambda_1(m).\]
Set further
\[w_\eps = \frac{v_\eps^p}{(u+\eps)^{p-1}}.\]
We have $w_\eps\in\w$. Indeed, we have $0\le w_\eps\le \eps^{1-2p}$ in $\Omega$ and $w_\eps=0$ in $\Omega^c$, so $w_\eps\in L^p(\R^N)$. In order to estimate the $\w$-norm of $w_\eps$, we use twice \eqref{lag} (with $q=p,p-1$, respectively) and get for a.e.\ $x,y\in\R^N$
\begin{align*}
|w_\eps(x)-w_\eps(y)| &= \Big|\frac{v_\eps^p(x)}{(u(x)+\eps)^{p-1}}-\frac{v_\eps^p(y)}{(u(y)+\eps)^{p-1}}\Big| \\
&\le \Big|\frac{v_\eps^p(x)-v_\eps^p(y)}{(u(x)+\eps)^{p-1}}\Big|+v_\eps^p(y)\Big|\frac{(u(x)+\eps)^{p-1}-(u(y)+\eps)^{p-1}}{(u(x)+\eps)^{p-1}(u(y)+\eps)^{p-1}}\Big| \\
&\le \frac{p}{\eps^{p-1}}\max\big\{v_\eps^{p-1}(x),\,v_\eps^{p-1}(y)\big\}|v_\eps(x)-v_\eps(y)| \\
&+ \frac{p-1}{\eps^p}\frac{\max\big\{(u(x)+\eps)^{p-2},\,(u(y)+\eps)^{p-2}\big\}}{(u(x)+\eps)^{p-1}(u(y)+\eps)^{p-1}}|u(x)-u(y)| \\
&\le \frac{p}{\eps^{2p-2}}|v_\eps(x)-v_\eps(y)|+\frac{p-1}{\eps^{p+1}}\max\Big\{\frac{1}{(u(x)+\eps)^{p-1}},\,\frac{1}{(u(y)+\eps)^{p-1}}\Big\}|u(x)-u(y)| \\
&\le \frac{p}{\eps^{2p-2}}|e_1(x)-e_1(y)|+\frac{p-1}{\eps^{2p}}|u(x)-u(y)|.
\end{align*}
Integrating over $\R^N\times\R^N$, we find $C_\eps>0$ s.t.\
\[\iint_{\R^N\times\R^N}\frac{|w_\eps(x)-w_\eps(y)|^p}{|x-y|^{N+ps}}\,dx\,dy \le C_\eps\big(\|e_1\|^p+\|u\|^p\big).\]
By Picone's discrete inequality \cite[Proposition 4.2]{BF} we have for a.e.\ $x,y\in\R^N$:
\beq\label{first5}
|v_\eps(x)-v_\eps(y)|^p \ge |u(x)-u(y)|^{p-2}(u(x)-u(y))(w_\eps(x)-w_\eps(y)).
\eeq
Testing \eqref{wep} with $w_\eps\in\w$, using \eqref{first5}, and recalling that $\|v_\eps\|^p\le\lambda_1(m)$, we get
\begin{align*}
\lambda_1(m) &\ge \iint_{\R^N\times\R^N}\frac{|v_\eps(x)-v_\eps(y)|^p}{|x-y|^{N+ps}}\,dx\,dy \\
&\ge \iint_{\R^N\times\R^N}\frac{|u(x)-u(y)|^{p-2}(u(x)-u(y))(w_\eps(x)-w_\eps(y))}{|x-y|^{N+ps}}\,dx\,dy \\
&= \lambda\int_\Omega m(x)|u|^{p-2}uw_\eps\,dx \\
&= \lambda\int_\Omega m(x)\Big[\frac{u}{u+\eps}\Big]^{p-1}v_\eps^p\,dx.
\end{align*}
Now let $\eps\to 0^+$. Then we have in $\Omega$
\[v_\eps^p \to e_1^p, \ \Big[\frac{u}{u+\eps}\Big]^{p-1}\to 1.\]
By Fatou's lemma, passing to the limit we get
\[\lambda_1(m) \ge \lambda\int_\Omega m(x)e_1^p\,dx = \lambda,\]
a contradiction. Similarly we argue if $u\le 0$. Thus, $u$ must change sign in $\Omega$.
\end{proof}

\noindent
The following result provides an alternative variational characterization of the second eigenvalue $\lambda_2(m)$, analogous to that introduced in \cite{DR} for the local case ($s=1$) and in \cite{BP} for the nonlocal case without weight ($m=1$):

\begin{proposition}\label{scd}
The smallest eigenvalue of \eqref{ep} above $\lambda_1(m)$ is
\[\lambda_2(m) = \inf_{f\in C_2(\s,\sm)}\max_{\omega\in\s}\|f(\omega)\|^p.\]
\end{proposition}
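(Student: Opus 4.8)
Set $c := \inf_{f\in C_2(\s,\sm)}\max_{\omega\in\s}\|f(\omega)\|^p$. The plan is to prove, in order: (i) $c\ge\lambda_2(m)$; (ii) $c$ is an eigenvalue of \eqref{ep}; (iii) $c>\lambda_1(m)$ and $\lambda_2(m)>\lambda_1(m)$; (iv) no eigenvalue of \eqref{ep} lies in $(\lambda_1(m),c)$. Granting these, $c$ is the smallest eigenvalue above $\lambda_1(m)$, and since $\lambda_2(m)$ is an eigenvalue (Proposition \ref{exi}) with $\lambda_1(m)<\lambda_2(m)\le c$, we get $c=\lambda_2(m)$, which is the assertion.

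Claim (i) is the index argument: for $f\in C_2(\s,\sm)$ the set $M=f(\s)$ is compact and symmetric, and the corestriction $f\colon\s\to M$ is an odd continuous surjection, so by monotonicity of the Fadell--Rabinowitz index $i(M)\ge i(\s)=2$; hence $M\in\mathcal{F}_2$ and $\max_\omega\|f(\omega)\|^p=\sup_M\Phi\ge\lambda_2(m)$, and taking the infimum over $f$ gives (i). For (ii) I would repeat the scheme of Proposition \ref{exi}: the restriction $\tilde\Phi$ of $\Phi$ to $\sm$ satisfies Palais--Smale, so were $c$ a regular value, the equivariant deformation theorem on $C^1$-manifolds (used in Proposition \ref{exi}) would give $\eps>0$ and an odd homeomorphism $\eta\colon\sm\to\sm$ sending $\{\Phi\le c+\eps\}$ into $\{\Phi\le c-\eps\}$; composing $\eta$ with a nearly optimal $f$, and noting $\eta\circ f\in C_2(\s,\sm)$, contradicts the definition of $c$. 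Thus $c$ is a critical value of $\tilde\Phi$, and testing the Lagrange equation with the critical point shows, exactly as in Proposition \ref{exi}, that the multiplier equals $c$, so $c$ is an eigenvalue.

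For (iii) I use simplicity of $\lambda_1(m)$: on $\sm$ the set $\{\|u\|^p\le\lambda_1(m)\}$ equals $\{\pm e_1\}$ (by the characterization of $\lambda_1(m)$ and its simplicity), and combining Lemma \ref{cmp} \ref{cmp1} (to keep weak limits on $\sm$), weak lower semicontinuity of $\|\cdot\|$, and uniform convexity of $\w$, one checks that for $\eps>0$ small the sublevel $A_\eps:=\{u\in\sm:\|u\|^p<\lambda_1(m)+\eps\}$ lies in $U^+\cup U^-$ for suitable disjoint relatively open sets $U^\pm\ni\pm e_1$ with $U^-=-U^+$. If $c=\lambda_1(m)$, some $f\in C_2(\s,\sm)$ has $f(\s)\subseteq A_\eps$; but $f(\s)$ is connected and symmetric, forcing $f(\s)\subseteq U^+\cap U^-=\emptyset$, absurd. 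If $\lambda_2(m)=\lambda_1(m)$, some $M\in\mathcal{F}_2$ satisfies $M\subseteq A_\eps\subseteq U^+\cup U^-$, and $u\mapsto\pm1$ on $M\cap U^\pm$ is an odd continuous surjection $M\to\mathbb{S}^0$, whence $i(M)\le i(\mathbb{S}^0)=1$, against $i(M)\ge2$.

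Step (iv) is the heart of the proof, and the main obstacle. Let $\mu\in(\lambda_1(m),c)$ be an eigenvalue with eigenfunction $u$; testing \eqref{wep} with $u$ gives $\Psi_m(u)=\|u\|^p/\mu>0$, so after scaling $u\in\sm$, $\|u\|^p=\mu$, and $u$ is nodal by Proposition \ref{first} \ref{first2}. Testing \eqref{wep} with $u^+$ and with $-u^-$, and using the pointwise inequality $|a-b|^{p-2}(a-b)(a^+-b^+)\ge|a^+-b^+|^p$ (and its analogue for negative parts, obtained by replacing $u$ with $-u$), yields $\mu\Psi_m(u^\pm)\ge\|u^\pm\|^p>0$; hence $v^\pm:=u^\pm/\Psi_m(u^\pm)^{1/p}\in\sm$ with $\|v^\pm\|^p\le\mu$. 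Then I connect $v^+$ to $-v^-$ inside $\sm\cap\{\|\cdot\|^p\le\mu\}$ by $g(t)=w_t/\Psi_m(w_t)^{1/p}$, $w_t=(1-t)u^+-tu^-$ (here $\Psi_m(w_t)=(1-t)^p\Psi_m(u^+)+t^p\Psi_m(u^-)>0$), using the pointwise bound
\[\bigl|\phi_t(a)-\phi_t(b)\bigr|^p\le|a-b|^{p-2}(a-b)\bigl[(1-t)^p(a^+-b^+)-t^p(a^--b^-)\bigr],\qquad \phi_t(r):=(1-t)r^+-tr^-,\]
whose only nontrivial case ($a,b$ of opposite sign) reduces, after rescaling, to Jensen's inequality for $r\mapsto r^p$; integrating this bound and reinserting the two identities obtained by testing \eqref{wep} with $u^+$ and $-u^-$ gives $\|w_t\|^p\le\mu\,\Psi_m(w_t)$, i.e.\ $\|g(t)\|^p\le\mu$. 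Next I connect $v^-$ to $v^+$ by the geodesic-convex arc $h(t)=[(1-t)(v^-)^p+t(v^+)^p]^{1/p}\in\sm$, for which \cite[Lemma 4.1]{FP} gives $\|h(t)\|^p\le(1-t)\|v^-\|^p+t\|v^+\|^p\le\mu$; continuity of $h$ in $\w$ follows from $|h(t)(x)-h(t)(y)|\le|v^+(x)-v^+(y)|+|v^-(x)-v^-(y)|$ (which $L^1$-dominates the Gagliardo integrand, so $t\mapsto[h(t)]_{s,p}^p$ is continuous) together with weak convergence and uniform convexity. Concatenating $g$, $-h$, $-g$, $h$ over $\s\cong\R/4\Z$ with antipodal action $t\mapsto t+2$ yields $f\in C_2(\s,\sm)$ with $\max_\omega\|f(\omega)\|^p\le\mu<c$, contradicting the definition of $c$; this proves (iv). The difficulty is intrinsic to the nonlocal setting: since $\|u^+-u^-\|^p\neq\|u^+\|^p+\|u^-\|^p$, a nodal eigenfunction cannot simply be decoupled, and it is precisely the surplus of $\langle\Phi'(u),u^\pm\rangle$ over $\|u^\pm\|^p$ coming from the mixed-sign terms, together with the two convexity inequalities and the continuity of the geodesic arc in $\w$, that makes the low-energy symmetric loop available.
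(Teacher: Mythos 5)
Your argument is correct, and its skeleton (index bound $i(f(\s))\ge i(\s)=2$ to get $c\ge\lambda_2(m)$; isolation of $\pm e_1$ in low sublevels of $\Phi$ on $\sm$ to get $\lambda_2(m)>\lambda_1(m)$; a symmetric loop of energy $\le\lambda$ built from a nodal $\lambda$-eigenfunction to exclude eigenvalues in $(\lambda_1(m),c)$) is the same as the paper's, but two ingredients are genuinely different. First, where the paper maps the whole circle at once via $\omega\mapsto(\omega_1u^+-\omega_2u^-)/K$ and invokes the inequality \cite[Eq.\ (4.7)]{BP} for all $\omega\in\s$, you only use the ``first--quadrant'' version (coefficients $(1-t,t)$, $t\in[0,1]$, for which your reduction to Jensen's inequality in the mixed-sign case is correct and self-contained) and then close the loop with the hidden-convexity geodesic $h(t)=[(1-t)(v^-)^p+t(v^+)^p]^{1/p}$ from \cite[Lemma 4.1]{FP}, whose membership in $\sm$, energy bound and $\w$-continuity you verify; the junctions $v^+\to-v^-\to-v^+\to v^-\to v^+$ match and the parameterization over $\R/4\Z$ is odd, so the construction is sound. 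The paper's route is shorter because it borrows the full-circle inequality from \cite{BP}; yours is more self-contained at the price of the extra arc and its continuity check. Second, you prove that $c$ is a critical value by running the deformation argument of Proposition \ref{exi} directly on the class $C_2(\s,\sm)$ (legitimate, since postcomposition with the odd homeomorphism $\eta$ preserves the class), whereas the paper simply cites \cite[Proposition 2.7]{C}; note, however, that in your scheme this step is redundant: steps (i), (iii), (iv) already force $\lambda_2(m)\ge c$ (as $\lambda_2(m)$ is an eigenvalue above $\lambda_1(m)$), hence $c=\lambda_2(m)$ is an eigenvalue by Proposition \ref{exi}. Finally, in step (iii) your sublevel-splitting uses strong $\w$-neighborhoods of $\pm e_1$ (via weak convergence, Lemma \ref{cmp}, norm convergence and uniform convexity), while the paper uses weighted $L^p$ neighborhoods; both yield the odd map onto $\mathbb{S}^0$ and the index bound $i(M)\le 1$, so this is only a cosmetic difference.
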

\begin{proof}
For now, let $\lambda_2(m)$ be defined by \eqref{lk} with $k=2$. We begin by proving that
\beq\label{scd1}
\lambda_1(m) < \lambda_2(m).
\eeq
From Proposition \ref{exi} we know that $\lambda_1(m)\le\lambda_2(m)$. Arguing by contradiction, assume that $\lambda_1(m)=\lambda_2(m)$. By \eqref{lk}, for any $n\in\N$ we can find $M_n\in\mathcal{F}_2$ s.t.\
\[\sup_{u\in M_n}\|u\|^p \le \lambda_1(m)+\frac{1}{n}.\]
For all $\rho>0$ define two relatively open subsets of $\sm$ by setting
\[B^\pm_\rho = \Big\{u\in\sm:\,\Big|\int_\Omega m(x)|u-(\pm e_1)|^p\,dx\Big|<\rho\Big\},\]
where $e_1\in\sm$ is defined as in Proposition \ref{first}. We distinguish two cases:
\begin{itemize}[leftmargin=1cm]
\item[$(a)$] If there exists a sequence $(\rho_n)$ s.t.\ $\rho_n\to 0^+$ and for all $n\in\N$
\[\Big\{u\in\sm:\,\|u\|^p\le\lambda_1(m)+\frac{1}{n}\Big\} \subseteq B^+_{\rho_n}\cup B^-_{\rho_n},\]
then in particular $M_n\subseteq B^+_{\rho_n}\cup B^-_{\rho_n}$. Since $M_n$ is symmetric, we have $M_n\cap B^\pm_{\rho_n}\neq\emptyset$, while for all $n\in\N$ big enough we have
\[M_n\cap B^+_{\rho_n}\cap B^-_{\rho_n}=\emptyset.\]
Otherwise, there would exist a sequence $(u_n)$ in $\sm$ s.t.\ for all $n\in\N$
\[\|u_n\|^p \le \lambda_1(m)+\frac{1}{n}\]
and
\beq\label{scd2}
\Big|\int_\Omega m(x)|u_n-(\pm e_1)|^p\,dx\Big| < \rho_n.
\eeq
From the first relation we see that $(u_n)$ is bounded in $\w$, hence passing to a subsequence we have $u_n\rightharpoonup u$ in $\w$, which in turn implies $\|u\|^p\le\lambda_1(m)$. By Lemma \ref{cmp} \ref{cmp1}, along a further subsequence we have
\[\int_\Omega m(x)|u|^p\,dx = \lim_n\int_\Omega m(x)|u_n|^p\,dx = 1,\]
hence $u\in\sm$. By Proposition \ref{first} (simplicity of $\lambda_1(m)$), the last relation implies either $u=e_1$ or $u=-e_1$. Assume $u=e_1$. Using again Lemma \ref{cmp} and passing to the limit in \eqref{scd2}, up to a subsequence we have
\[2^p\int_\Omega m(x)e_1^p\,dx = \lim_n\int_\Omega m(x)|u_n+e_1|^p\,dx = 0,\]
a contradiction. The case $u=-e_1$ is dealt with similarly. So we have split $M_n$ into two separate parts. Set for all $u\in M_n$
\[\varphi(u) = \begin{cases}
1 & \text{if $u\in M_n\cap B^+_{\rho_n}$} \\
-1 & \text{if $u\in M_n\cap B^-_{\rho_n}$.}
\end{cases}\]
Then $\varphi\in C_2(M_n,\R)$ and $\varphi(M_n)=\mathbb{S}^0$. By \cite[Example 2.11, Proposition 2.12 $(i_2)$]{PAO} we have
\[i(M_n) \le i(\mathbb{S}^0) = 1,\]
against $M_n\in\mathcal{F}_2$.
\item[$(b)$] Otherwise, there exist a sequence $(u_n)$ in $\sm$ and $\rho_0>0$, s.t.\ for all $n\in\N$ we have
\[\|u_n\|^p \le \lambda_1(m)+\frac{1}{n},\]
and $u_n\notin B^+_{\rho_0}\cup B^-_{\rho_0}$, which amounts to
\[\Big|\int_\Omega m(x)|u_n-(\pm e_1)|^p\,dx\Big| \ge \rho_0.\]
Reasoning as in case $(a)$, up to a subsequence we have either $u_n\rightharpoonup e_1$ or $u_n\rightharpoonup -e_1$ in $\w$. In either case, passing to the limit in the inequality above and using Lemma \ref{cmp} again, we reach a contradiction.
\end{itemize}
In both cases \eqref{scd1} is achieved. Now set
\[\tilde\lambda_2(m) = \inf_{f\in C_2(\s,\sm)}\max_{\omega\in\s}\|f(\omega)\|^p.\]
It is easily seen that
\beq\label{scd3}
\lambda_2(m) \le \tilde\lambda_2(m).
\eeq
Indeed, fix $f\in C_2(\s,\sm)$ and set $M=f(\s)$. Again by \cite[Example 2.11, Proposition 2.12 $(i_2)$]{PAO} we have
\[i(M) \ge i(\s) = 2,\]
hence $M\in\mathcal{F}_2$. Now \eqref{lk} implies
\[\lambda_2(m) \le \max_{u\in M}\|u\|^p = \max_{\omega\in\s}\|f(\omega)\|^p.\]
Taking the infimum over $f\in C_2(\s,\sm)$ we have \eqref{scd3}. Next, we prove that
\beq\label{scd4}
\tilde\lambda_2(m) = \min\big\{\lambda>\lambda_1(m): \ \text{$\lambda$ is an eigenvalue of \eqref{ep}}\big\}.
\eeq
First, $\tilde\lambda_2(m)$ is an eigenvalue by \cite[Proposition 2.7]{C}, and by concatenating \eqref{scd1}, \eqref{scd3} we have $\lambda_1(m)<\tilde\lambda_2(m)$. Further, let $\lambda>\lambda_1(m)$ be an eigenvalue, $u\in\w\setminus\{0\}$ be a $\lambda$-eigenfunction. By Proposition \ref{first} \ref{first2}, we have $u^\pm\neq 0$. Test \eqref{wep} with $u^+\in\w$ and use \cite[Lemma 2.1]{IL} to get
\begin{align*}
0 &< \|u^+\|^p \\
&\le \iint_{\R^N\times\R^N}\frac{|u(x)-u(y)|^{p-2}(u(x)-u(y))(u^+(x)-u^+(y))}{|x-y|^{N+ps}}\,dx\,dy \\
&= \lambda\int_\Omega m(x)|u|^{p-2}uu^+\,dx \\
&= \lambda\int_\Omega m(x)(u^+)^p\,dx,
\end{align*}
hence
\[\int_\Omega m(x)(u^+)^p\,dx > 0.\]
Similarly we get
\[\int_\Omega m(x)(u^-)^p\,dx > 0.\]
For all $x,y\in\R^N$ set
\[U = u^+(x)-u^+(y), \ V = u^-(x)-u^-(y)\]
(we omit the variables for notational simplicity). For any $\omega=(\omega_1,\omega_2)\in\s$, by \cite[Eq.\ (4.7)]{BP} we have
\beq\label{scd5}
|\omega_1U-\omega_1V|^{p-2}(\omega_1U-\omega_1V)\omega_1U-|\omega_2U-\omega_2V|^{p-2}(\omega_2U-\omega_2V)\omega_2V \ge |\omega_1U-\omega_2V|^p.
\eeq
Now test again \eqref{wep} with $u^\pm\in\w$ and recall that $u=u^+-u^-$ to get
\[\lambda\int_\Omega m(x)(u^+)^p\,dx = \iint_{\R^N\times\R^N}\frac{|U-V|^{p-2}(U-V)U}{|x-y|^{N+ps}}\,dx\,dy,\]
\[-\lambda\int_\Omega m(x)(u^-)^p\,dx = \iint_{\R^N\times\R^N}\frac{|U-V|^{p-2}(U-V)V}{|x-y|^{N+ps}}\,dx\,dy.\]
Multiply by $|\omega_1|^p$, $|\omega_2|^p$, respectively, the relations above, then subtract:
\begin{align}\label{scd6}
\lambda K^p &= \iint_{\R^N\times\R^N}\frac{|\omega_1U-\omega_1V|^{p-2}(\omega_1U-\omega_1V)\omega_1U-|\omega_2U-\omega_2V|^{p-2}(\omega_2U-\omega_2V)\omega_2V}{|x-y|^{N+ps}}\,dx\,dy \\
\nonumber &\ge \iint_{\R^N\times\R^N}\frac{|\omega_1U-\omega_2V|^p}{|x-y|^{N+ps}}\,dx\,dy,
\end{align}
where we have also used \eqref{scd5} and set
\[K = \Big[|\omega_1|^p\int_\Omega m(x)(u^+)^p\,dx+|\omega_2|^p\int_\Omega m(x)(u^-)^p\,dx\Big]^\frac{1}{p} > 0.\]
Further, set for any $\omega=(\omega_1,\omega_2)\in\s$
\[f(\omega) = \frac{\omega_1u^+-\omega_2u^-}{K}\in\w.\]
Clearly, $f:\s\to\w$ is a continuous, odd map. Plus, recalling that $u^+u^-=0$ in $\Omega$, we have
\[\int_\Omega m(x)|f(\omega)|^p\,dx = \frac{|\omega_1|^p}{K^p}\int_\Omega m(x)(u^+)^p\,dx+\frac{|\omega_2|^p}{K^p}\int_\Omega m(x)(u^-)^p\,dx = 1,\]
so $f\in C_2(\s,\sm)$. By the definition of $U$, $V$ and \eqref{scd6} we have for all $\omega\in\s$
\[\|f(\omega)\|^p = \frac{1}{K^p}\iint_{\R^N\times\R^N}\frac{|\omega_1U-\omega_2V|^p}{|x-y|^{N+ps}}\,dx\,dy \le \lambda.\]
Recalling the definition of $\tilde\lambda_2(m)$, we deduce
\[\tilde\lambda_2(m) \le \lambda,\]
thus proving \eqref{scd4}. To conclude, we note that from \eqref{scd1} and \eqref{scd4} it follows
\[\tilde\lambda_2(m) \le \lambda_2(m),\]
which along with \eqref{scd3} implies $\lambda_2(m)=\tilde\lambda_2(m)$.
\end{proof}

\begin{remark}
Unlike in the case $m\in L^\infty(\Omega)$, we cannot easily see that eigenfunctions are bounded in $\Omega$. In fact, this is not true in general for eigenvalue problems with singular weights, even in the local case $s=1$ with weight $m\in L^\frac{N}{p}(\Omega)$, see \cite[Example 2.2]{LP}.
\end{remark}

\section{Monotonicity of eigenvalues}\label{sec4}

\noindent
In this final section, we study the monotonicity of the maps $m\mapsto\lambda_k(m)$ ($k\in\N$) defined in \eqref{lk}, associating to any weight $m\in\mathcal{W}_{s,p}$ the variational eigenvalues of \eqref{ep}.
\vskip2pt
\noindent
For a general $k\in\N$ we prove non-strict nonincreasing monotonicity:

\begin{proposition}\label{mok}
Let $m,\tilde m\in\mathcal{W}_{s,p}$ be s.t.\ $m\le\tilde m$ in $\Omega$. Then, $\lambda_1(m)\ge\lambda_1(\tilde m)$ for all $k\in\N$.
\end{proposition}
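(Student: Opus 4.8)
The plan is to exploit the min-max characterization \eqref{lk} directly, using monotonicity of the index class $\mathcal{F}_k$ under the hypothesis $m\le\tilde m$. The statement (despite the typo writing $\lambda_1$ for $\lambda_k$) asserts $\lambda_k(m)\ge\lambda_k(\tilde m)$ for all $k$. The key observation is that, while the constraint manifolds $\mathcal{S}_{s,p}(m)$ and $\mathcal{S}_{s,p}(\tilde m)$ differ, one can rescale: for $u\in\mathcal{S}_{s,p}(m)$ one has $\Psi_{\tilde m}(u)=\int_\Omega\tilde m|u|^p\,dx\ge\int_\Omega m|u|^p\,dx=1$, so the normalized function $u/\Psi_{\tilde m}(u)^{1/p}$ lies on $\mathcal{S}_{s,p}(\tilde m)$ and has smaller (or equal) value of $\Phi$, since $\Phi$ is $p$-homogeneous and $\Psi_{\tilde m}(u)^{1/p}\ge 1$.

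First I would note that $\Psi_{\tilde m}(u)\ge 1$ for every $u\in\mathcal{S}_{s,p}(m)$: indeed $\tilde m-m\ge 0$ in $\Omega$ gives $\int_\Omega(\tilde m-m)|u|^p\,dx\ge 0$, hence $\Psi_{\tilde m}(u)\ge\Psi_m(u)=1$. In particular $\Psi_{\tilde m}(u)>0$, so the radial retraction
\[
T(u) = \frac{u}{\Psi_{\tilde m}(u)^{1/p}}
\]
is well defined on $\mathcal{S}_{s,p}(m)$ and maps it continuously into $\mathcal{S}_{s,p}(\tilde m)$. Moreover $T$ is odd (both $\Psi_{\tilde m}$ and the $p$-th root of a nonnegative quantity are even under $u\mapsto -u$), and by $p$-homogeneity of $\Phi$,
\[
\Phi(T(u)) = \frac{\Phi(u)}{\Psi_{\tilde m}(u)} \le \Phi(u),
\]
since $\Psi_{\tilde m}(u)\ge 1$.

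Next I would run the min-max comparison. Fix $k\in\N$ and take any $M\in\mathcal{F}_k(m)$, i.e. $M\subseteq\mathcal{S}_{s,p}(m)$ closed, symmetric, with $i(M)\ge k$. Then $T(M)\subseteq\mathcal{S}_{s,p}(\tilde m)$ is symmetric; it is compact (being a continuous image of... — actually $M$ need not be compact, so instead I would take the closure $\overline{T(M)}$, or better, observe that since $i$ is invariant under odd continuous maps one only needs $i(T(M))\ge k$). By the monotonicity property of the Fadell-Rabinowitz index under odd continuous maps (\cite[Proposition 2.12 $(i_4)$]{PAO}), $i(T(M))\ge i(M)\ge k$. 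After replacing $T(M)$ by a closed symmetric superset of the same index if necessary (e.g. its closure, using continuity of $\Phi$ so that the sup is not increased), we get $\overline{T(M)}\in\mathcal{F}_k(\tilde m)$ with
\[
\sup_{v\in\overline{T(M)}}\Phi(v) = \sup_{u\in M}\Phi(T(u)) \le \sup_{u\in M}\Phi(u).
\]
Therefore $\lambda_k(\tilde m)\le\sup_{u\in M}\Phi(u)$, and taking the infimum over $M\in\mathcal{F}_k(m)$ yields $\lambda_k(\tilde m)\le\lambda_k(m)$.

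The main obstacle I expect is a technicality rather than a conceptual one: ensuring the image set $\overline{T(M)}$ is admissible, i.e. closed and of index $\ge k$, and that passing to its closure does not spoil the sup bound. This is handled by continuity of $\Phi$ and $\overline{T(M)}\subseteq\mathcal{S}_{s,p}(\tilde m)$ (which is closed in $\w$, since $\Psi_{\tilde m}$ is continuous by Lemma \ref{cmp} \ref{cmp1}, so the constraint survives the closure), together with the index being monotone and continuous. One should also double-check that $T$ is genuinely continuous on $M$, which follows from continuity of $\Psi_{\tilde m}$ and the fact that $\Psi_{\tilde m}(u)\ge 1$ stays bounded away from zero on $\mathcal{S}_{s,p}(m)$, so the denominator causes no trouble.
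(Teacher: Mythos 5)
Your proposal is correct and follows essentially the same route as the paper: the rescaling map $u\mapsto u/\Psi_{\tilde m}(u)^{1/p}$ from $\mathcal{S}_{s,p}(m)$ to $\mathcal{S}_{s,p}(\tilde m)$, the bound $\Phi(T(u))\le\Phi(u)$ from $\Psi_{\tilde m}(u)\ge 1$, and monotonicity of the Fadell--Rabinowitz index, followed by the min-max comparison in \eqref{lk}. The only cosmetic difference is that the paper exhibits the explicit inverse and treats $\eta(M)$ directly as an admissible set, whereas you pass to $\overline{T(M)}$ and use continuity of $\Phi$ and closedness of $\mathcal{S}_{s,p}(\tilde m)$ — an equally valid (indeed slightly more careful) way to ensure admissibility.
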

\begin{proof}
Fix $\eps>0$, $k\in\N$. By \eqref{lk}, there exists $M\in\mathcal{F}_k$ s.t.\
\beq\label{mok1}
\sup_{u\in M}\|u\|^p \le \lambda_k(m)+\eps.
\eeq
Since $m\le\tilde m$ in $\Omega$, for all $u\in M$ we have
\[\int_\Omega\tilde m(x)|u|^p\,dx \ge \int_\Omega m(x)|u|^p\,dx = 1.\]
So we may set
\[\eta(u) = \frac{u}{\Big[\int_\Omega\tilde m(x)|u|^p\,dx\Big]^\frac{1}{p}} \in \mathcal{S}_{s,p}(\tilde m).\]
It is easily seen that $\eta$ is an odd map, with inverse given for all $v\in\eta(M)$ by
\[\eta^{-1}(v) = \frac{v}{\Big[\int_\Omega m(x)|v|^p\,dx\Big]^\frac{1}{p}}.\]
Both $\eta$, $\eta^{-1}$ are continuous by Lemma \ref{cmp} \ref{cmp1}, so $\eta:M\to\eta(M)$ is an odd homeomorphism. So the set $\eta(M)\subset\mathcal{S}_{s,p}(\tilde m)$ is closed and symmetric, and by \cite[Proposition 2.12 $(i_2)$]{PAO} we have $i(\eta(M))=i(M)\ge k$. Thus, by \eqref{lk} (with weight $\tilde m$) and \eqref{mok1} we have
\[\lambda_k(\tilde m) \le \sup_{v\in\eta(M)}\|v\|^p = \sup_{u\in M}\frac{\|u\|^p}{\int_\Omega\tilde m(x)|u|^p\,dx} \le \lambda_k(m)+\eps.\]
Letting $\eps\to 0^+$, we get $\lambda_k(\tilde m)\le\lambda_k(m)$.
\end{proof}

\noindent
When it comes to {\em strict} decreasing monotonicity, the above argument fails in general, due to a lack of compactness of the sets $M\in\mathcal{F}_k$. Nevertheless, by means of the alternative characterizations of $\lambda_1(m)$, $\lambda_2(m)$ proved in Section \ref{sec3} we are able to prove such property at least for $k=1,2$.
\vskip2pt
\noindent
For the first eigenvalue, we can prove strict decreasing monotonicity under a mild assumption:

\begin{proposition}\label{mof}
Let $m,\tilde m\in\mathcal{W}_{s,p}$ be s.t.\ $m\le\tilde m$ in $\Omega$, $m\neq\tilde m$. Then, $\lambda_1(m)>\lambda_1(\tilde m)$.
\end{proposition}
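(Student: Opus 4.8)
The plan is to exploit the minimization characterization $\lambda_1(m) = \min_{u\in\sm}\|u\|^p$ from Proposition \ref{first}, together with the fact that the minimum is attained by the positive eigenfunction $e_1$. Let $e_1\in\sm$ be the positive, normalized $\lambda_1(m)$-eigenfunction, so $\|e_1\|^p = \lambda_1(m)$ and $e_1>0$ in $\Omega$. The key observation is that since $m\le\tilde m$ in $\Omega$ with $m\neq\tilde m$, and $e_1>0$ a.e.\ in $\Omega$, we have the \emph{strict} inequality
\[
\int_\Omega\tilde m(x)e_1^p\,dx > \int_\Omega m(x)e_1^p\,dx = 1.
\]
Indeed, $(\tilde m - m)e_1^p \ge 0$ a.e.\ and is strictly positive on the set $\{\tilde m > m\}$, which has positive measure; moreover this function lies in $L^1(\Omega)$ since $\tilde m - m\in L^{N/(ps)}(\Omega)$ and $e_1^p\in L^{N/(N-ps)}(\Omega)$ by the Sobolev embedding, so its integral is a well-defined positive number.

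Next I would normalize $e_1$ with respect to the new weight: set
\[
v = \frac{e_1}{\Big[\int_\Omega\tilde m(x)e_1^p\,dx\Big]^{1/p}} \in \mathcal{S}_{s,p}(\tilde m),
\]
which is admissible precisely because the denominator is a positive real number. Then, using the characterization $\lambda_1(\tilde m) = \min_{u\in\mathcal{S}_{s,p}(\tilde m)}\|u\|^p$ from Proposition \ref{first} applied to the weight $\tilde m$, we get
\[
\lambda_1(\tilde m) \le \|v\|^p = \frac{\|e_1\|^p}{\int_\Omega\tilde m(x)e_1^p\,dx} = \frac{\lambda_1(m)}{\int_\Omega\tilde m(x)e_1^p\,dx} < \lambda_1(m),
\]
where the last strict inequality uses the strict lower bound on the denominator established above. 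This concludes the proof.

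The only point requiring genuine care is the strictness of $\int_\Omega(\tilde m - m)e_1^p\,dx > 0$, which rests on the strict positivity $e_1>0$ \emph{a.e.} in $\Omega$ — and this is exactly what Proposition \ref{first}\ref{first1} (via the strong maximum principle \cite[Proposition B.3]{BSY}) provides. Everything else is a one-line test-function argument against the Rayleigh-quotient characterization; there is no compactness obstruction here, in contrast with the general-$k$ case, because we test against a single explicit competitor rather than deforming a whole symmetric set. For $\lambda_2$ the situation is more delicate and will require the orbit characterization of Proposition \ref{scd}, but for $\lambda_1$ the monotonicity is essentially immediate once strict positivity of $e_1$ is in hand.
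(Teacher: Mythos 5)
Your proposal is correct and follows exactly the paper's own proof: normalize the positive first eigenfunction $e_1$ with respect to $\tilde m$, use the strict inequality $\int_\Omega\tilde m\,e_1^p\,dx>1$ coming from $e_1>0$ and $m\neq\tilde m$, and test it in the Rayleigh-quotient characterization of $\lambda_1(\tilde m)$. The added justification of integrability of $(\tilde m-m)e_1^p$ is a harmless extra detail.
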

\begin{proof}
Let $e_1\in\sm$ be defined as in Proposition \ref{first}. Since $\tilde m>m$ on a positively measured subset of $\Omega$, and $e_1>0$ in $\Omega$, we have
\[\int_\Omega \tilde m(x)e_1^p\,dx > \int_\Omega m(x)e_1^p\,dx = 1.\]
Besides, $\|e_1\|^p=\lambda_1(m)$. Set now
\[v = \frac{e_1}{\Big[\int_\Omega \tilde m(x)e_1^p\,dx\Big]^\frac{1}{p}} \in \mathcal{S}_{s,p}(\tilde m).\]
By Proposition \ref{first} (variational characterization of $\lambda_1(\tilde m)$) we have
\[\lambda_1(\tilde m) \le \|v\|^p = \frac{\|e_1\|^p}{\int_\Omega \tilde m(x)e_1^p\,dx} < \lambda_1(m),\]
which concludes the proof.
\end{proof}

\noindent
Regarding the second eigenvalue, we prove strict decreasing monotonicity under a stronger assumption (as in \cite{AT}):

\begin{proposition}\label{mos}
Let $m,\tilde m\in\mathcal{W}_{s,p}$ be s.t.\ $m<\tilde m$ in $\Omega$. The, $\lambda_2(m)>\lambda_2(\tilde m)$.
\end{proposition}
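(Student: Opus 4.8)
The plan is to exploit the alternative characterization of $\lambda_2$ from Proposition~\ref{scd}, namely
\[\lambda_2(m) = \inf_{f\in C_2(\s,\sm)}\max_{\omega\in\s}\|f(\omega)\|^p,\]
together with the fact (also from Proposition~\ref{scd}, equation~\eqref{scd4}) that this infimum is attained by an eigenfunction. More precisely, let $u\in\w$ be a $\lambda_2(\tilde m)$-eigenfunction; by Proposition~\ref{first}~\ref{first2}, $u$ is nodal, so $u^\pm\neq 0$, and as in the proof of Proposition~\ref{scd} both $\int_\Omega\tilde m(x)(u^\pm)^p\,dx>0$. One then builds the explicit competitor path: for $\omega=(\omega_1,\omega_2)\in\s$ set
\[\tilde K = \Big[|\omega_1|^p\int_\Omega\tilde m(x)(u^+)^p\,dx+|\omega_2|^p\int_\Omega\tilde m(x)(u^-)^p\,dx\Big]^\frac{1}{p}, \quad \tilde f(\omega)=\frac{\omega_1u^+-\omega_2u^-}{\tilde K},\]
which lies in $C_2(\s,\mathcal{S}_{s,p}(\tilde m))$ and satisfies $\max_{\omega\in\s}\|\tilde f(\omega)\|^p=\lambda_2(\tilde m)$ (the computation from Proposition~\ref{scd}, with \eqref{scd5}--\eqref{scd6}, gives $\le\lambda_2(\tilde m)$, and the reverse holds because $\lambda_2(\tilde m)=\tilde\lambda_2(\tilde m)$ is the infimum).

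Next I would transplant this path to the weight $m$. Since $m<\tilde m$ in $\Omega$ and $u^\pm\neq 0$, we have for every $\omega\in\s$ the \emph{strict} inequality
\[K(\omega) := \Big[|\omega_1|^p\int_\Omega m(x)(u^+)^p\,dx+|\omega_2|^p\int_\Omega m(x)(u^-)^p\,dx\Big]^\frac{1}{p} < \tilde K(\omega),\]
where I must first check $\int_\Omega m(x)(u^\pm)^p\,dx>0$ so that $K(\omega)>0$; this follows because $u^\pm\neq 0$ forces $\{u^\pm>0\}$ to have positive measure, and on the positivity set of $\tilde m$ (which meets $\{u^\pm>0\}$ in positive measure, since $\tilde m>m$ a.e.\ and... — here I need that $m<\tilde m$ a.e.\ implies $\tilde m>0$ on a large enough set, which is immediate only where $m\ge 0$; more carefully, $\int_\Omega m(x)(u^\pm)^p\,dx < \int_\Omega\tilde m(x)(u^\pm)^p\,dx$ and the right side is positive, but the left could a priori be $\le 0$). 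To handle this I would instead argue directly: if $\int_\Omega m(x)(u^+)^p\,dx\le 0$, then rescaling $u^+$ alone already exhibits a path with one endpoint of norm $\le\lambda_2(\tilde m)$ that lies outside $\sm$ in a way that still lowers the Rayleigh quotient — but the clean route is to note that since $\max_\omega\|\tilde f(\omega)\|^p=\lambda_2(\tilde m)>\lambda_1(\tilde m)>0$ and the path is genuinely two-dimensional, one can use continuity and the intermediate structure to reduce to the case where both masses are strictly positive, e.g.\ by a small perturbation of $u^\pm$ preserving nodality. Assuming both $m$-masses are positive, define $g(\omega)=\omega_1u^+/K(\omega) - \omega_2u^-/K(\omega)\in\mathcal{S}_{s,p}(m)$; then $g\in C_2(\s,\sm)$ and
\[\|g(\omega)\|^p = \frac{\tilde K(\omega)^p}{K(\omega)^p}\,\|\tilde f(\omega)\|^p.\]

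Finally I would close the argument by a compactness/continuity step. The ratio $\tilde K(\omega)^p/K(\omega)^p$ is a continuous, strictly-greater-than-one function on the compact set $\s$, so it attains a minimum $\theta>1$. Hence
\[\lambda_2(m) \le \max_{\omega\in\s}\|g(\omega)\|^p \le \frac{1}{\theta}\max_{\omega\in\s}\|\tilde f(\omega)\|^p = \frac{1}{\theta}\,\lambda_2(\tilde m) < \lambda_2(\tilde m),\]
wait — this is backwards: we want $\lambda_2(m)>\lambda_2(\tilde m)$. The point is that the \emph{right} rescaling goes the other way: to get a competitor for $\tilde m$ from an eigenfunction of $m$, and thus bound $\lambda_2(\tilde m)$ from above strictly by $\lambda_2(m)$. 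So I would start instead from a $\lambda_2(m)$-eigenfunction $u$, form $\tilde f(\omega)=(\omega_1 u^+-\omega_2 u^-)/\tilde K(\omega)\in C_2(\s,\mathcal{S}_{s,p}(\tilde m))$, and compute, using \eqref{scd5}--\eqref{scd6} with weight $m$ giving $\|(\omega_1u^+-\omega_2u^-)\text{-type quotient}\|^p\le\lambda_2(m)\,K(\omega)^p$:
\[\|\tilde f(\omega)\|^p = \frac{1}{\tilde K(\omega)^p}\iint\frac{|\omega_1 U-\omega_2 V|^p}{|x-y|^{N+ps}}\,dx\,dy \le \frac{K(\omega)^p}{\tilde K(\omega)^p}\,\lambda_2(m) \le \frac{\lambda_2(m)}{\theta},\]
with $\theta=\min_{\omega\in\s}\tilde K(\omega)^p/K(\omega)^p>1$. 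Taking the max over $\omega$ and invoking Proposition~\ref{scd} for $\tilde m$ yields $\lambda_2(\tilde m)\le\lambda_2(m)/\theta<\lambda_2(m)$.

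The main obstacle I anticipate is the positivity of $\int_\Omega m(x)(u^\pm)^p\,dx$: with a singular indefinite weight one cannot take for granted that these integrals (which control $K(\omega)$) are positive. The resolution should be exactly the argument used in the proof of Proposition~\ref{scd} — testing \eqref{wep} with $u^\pm$ and using \cite[Lemma~2.1]{IL} shows $0<\|u^\pm\|^p\le\lambda_2(m)\int_\Omega m(x)(u^\pm)^p\,dx$, hence $\int_\Omega m(x)(u^\pm)^p\,dx>0$ automatically for a $\lambda_2(m)$-eigenfunction. With that in hand, $K(\omega)>0$ on all of $\s$, the strict inequality $K(\omega)<\tilde K(\omega)$ holds pointwise since $m<\tilde m$ a.e.\ and $\int_\Omega(\tilde m-m)(u^\pm)^p\,dx>0$, and the compactness of $\s$ upgrades this to the uniform gap $\theta>1$ that forces the strict conclusion.
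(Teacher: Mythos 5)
Your final, corrected argument is sound: starting from a $\lambda_2(m)$-eigenfunction $u$ (which exists by Proposition \ref{exi} and is nodal by Proposition \ref{first} \ref{first2}), the positivity of $\int_\Omega m(x)(u^\pm)^p\,dx$ follows exactly as in the proof of Proposition \ref{scd} by testing \eqref{wep} with $u^\pm$, the estimate \eqref{scd5}--\eqref{scd6} with weight $m$ gives $\iint|\omega_1U-\omega_2V|^p|x-y|^{-N-ps}\,dx\,dy\le\lambda_2(m)K(\omega)^p$, and since $m<\tilde m$ a.e.\ forces $\tilde K(\omega)>K(\omega)>0$ for every $\omega\in\s$, compactness of $\s$ yields a uniform $\theta>1$ and hence $\lambda_2(\tilde m)\le\max_\omega\|\tilde f(\omega)\|^p\le\lambda_2(m)/\theta<\lambda_2(m)$ via the characterization of $\lambda_2(\tilde m)$. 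This is, however, a genuinely different route from the paper's. The paper never touches a $\lambda_2(m)$-eigenfunction: it takes near-optimal abstract paths $f_n\in C_2(\s,\sm)$ with $\max_\omega\|f_n(\omega)\|^p\le\lambda_2(m)+1/n$, renormalizes them onto $\mathcal{S}_{s,p}(\tilde m)$, and obtains the strict gap from the auxiliary quantity $C(m,\tilde m,\lambda)=\inf\{\int_\Omega\tilde m|u|^p\,dx:\,u\in\sm,\ \|u\|^p\le\lambda\}>1$, proved by a minimizing-sequence/weak-convergence argument using Lemma \ref{cmp}; this only uses the statement of Proposition \ref{scd}, and the constant $C(m,\tilde m,\lambda)$ gives a quantitative gap uniform over a whole sublevel set. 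Your version trades that infinite-dimensional compactness step for the trivial compactness of $\s$, at the price of re-running the internal machinery of the proof of Proposition \ref{scd} (the Picone-type inequality \eqref{scd5} and the testing with $u^\pm$), since the bound $\iint|\omega_1U-\omega_2V|^p|x-y|^{-N-ps}\,dx\,dy\le\lambda_2(m)K(\omega)^p$ is not part of that proposition's statement. Note also that the first half of your write-up (starting from a $\lambda_2(\tilde m)$-eigenfunction, with the vague ``perturbation preserving nodality'' fix) does not work precisely for the reason you spotted — for a $\tilde m$-eigenfunction the $m$-masses of $u^\pm$ need not be positive — so in a clean write-up you should delete that detour and keep only the reversed argument.
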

\begin{proof}
For any $\lambda>\lambda_1(m)$ we set
\[C(m,\tilde m,\lambda) = \inf\Big\{\int_\Omega\tilde m(x)|u|^p\,dx:\,u\in\sm,\,\|u\|^p\le\lambda\Big\}\]
(the definition is well posed due to the characterization of $\lambda_1(m)$ from Proposition \ref{first}). We claim that
\beq\label{mos1}
C(m,\tilde m,\lambda) > 1.
\eeq
Indeed, since $m<\tilde m$ in $\Omega$, we clearly have $C(m,\tilde m,\lambda)\ge 1$. Further, let $(u_n)$ be a minimizing sequence in $\sm$ for $C(m,\tilde m,\lambda)$. Then, $(u_n)$ is bounded in $\w$. By reflexivity, passing to a subsequence we have $u_n\rightharpoonup u$ in $\w$, hence
\[\|u\|^p \le \liminf_n\|u_n\|^p \le \lambda.\]
By Lemma \ref{cmp} \ref{cmp1}, up to a further subsequence we have
\[\int_\Omega m(x)|u|^p\,dx = \lim_n\int_\Omega m(x)|u_n|^p\,dx = 1,\]
\[\int_\Omega\tilde m(x)|u|^p\,dx = \lim_n\int_\Omega\tilde m(x)|u_n|^p\,dx = C(m,\tilde m,\lambda).\]
From the first relation we deduce $u\in\sm$. Using the second one and $m<\tilde m$ in $\Omega$, we get
\[C(m,\tilde m,\lambda) = \int_\Omega\tilde m(x)|u|^p\,dx > \int_\Omega m(x)|u|^p\,dx = 1,\]
which proves \eqref{mos1} (incidentally, we have also proved that $C(m,\tilde m,\lambda)$ is attained). 
\vskip2pt
\noindent
Now we shall prove our assertion. By Proposition \ref{mok} we have $\lambda_k(m)\ge\lambda_k(\tilde m)$. By Proposition \ref{scd}, for all $n\in\N$ there exists $f_n\in C_2(\s,\sm)$ s.t.\
\beq\label{mos2}
\max_{\omega\in\s}\|f_n(\omega)\|^p \le \lambda_2(m)+\frac{1}{n}.
\eeq
Since $m<\tilde m$ in $\Omega$, for all $\omega\in\s$ we have
\[\int_\Omega\tilde m(x)|f_n(\omega)|^p\,dx > \int_\Omega m(x)|f_n(\omega)|^p\,dx = 1.\]
So we may set for all $n\in\N$, $\omega\in\s$
\[\tilde f_n(\omega) = \frac{f_n(\omega)}{\Big[\int_\Omega\tilde m(x)|f_n(\omega)|^p\,dx\Big]} \in \mathcal{S}_{s,p}(\tilde m).\]
Using Lemma \ref{cmp} (with weight $\tilde m$) we see that $\tilde f_n\in C_2(\s,\mathcal{S}_{s,p}(\tilde m))$. By compactness of $\s$, we can find $\overline\omega\in\s$ s.t.\
\[\|\tilde f_n(\overline\omega)\|^p = \max_{\omega\in\s}\|\tilde f_n(\omega)\|^p.\]
Clearly $\lambda_2(m)+1>\lambda_1(m)$, so we may define $C(m,\tilde m,\lambda_2(m)+1)$ as above. Now, by Proposition \ref{scd} (characterization of $\lambda_2(\tilde m)$), along with \eqref{mos2} and the definition of $C(m,\tilde m,\lambda_2(m)+1)$ we have
\[\lambda_2(\tilde m) \le \|\tilde f_n(\overline\omega)\|^p = \frac{\|f_n(\overline\omega)\|^p}{\int_\Omega\tilde m(x)|f_n(\overline\omega)|^p\,dx} \le \frac{\lambda_2(m)+1/n}{C(m,\tilde m,\lambda_2(m)+1)}.\]
Passing to the limit as $n\to\infty$ and using \eqref{mos1} we get
\[\lambda_2(\tilde m) \le \frac{\lambda_2(m)}{C(m,\tilde m,\lambda_2(m)+1)} < \lambda_2(m),\]
thus concluding the proof.
\end{proof}

\noindent
{\bf Conclusion.} Simply lining up Propositions \ref{exi}, \ref{first}, \ref{scd}, \ref{mok}, \ref{mof}, and \ref{mos}, we have the full proof of Theorem \ref{main}.
\vskip4pt
\noindent
{\bf Acknowledgement.} The author is a member of GNAMPA (Gruppo Nazionale per l'Analisi Matematica, la Probabilit\`a e le loro Applicazioni) of INdAM (Istituto Nazionale di Alta Matematica 'Francesco Severi') and is supported by the research project \emph{Evolutive and Stationary Partial Differential Equations with a Focus on Biomathematics}, funded by Fondazione di Sardegna (2019), and by the grant PRIN-2017AYM8XW: \emph{Nonlinear Differential Problems via Variational, Topological and Set-valued Methods}. The author thanks the anonymous Referee for her/his careful reading of the manuscript and useful suggestions. Finally, the author acknowledges Prof.\ L.\ Brasco's precious help: 'I can no other answer make, but, thanks, and thanks' (W.\ Shakespeare, {\em The twelfth night}, III, 3).


\begin{thebibliography}{99}

\bibitem{A}
{\sc A.\ Anane,}
Simplicit\'e et isolation de la premi\`ere valeur propre du $p$-laplacien avec poids,
{\em C. R. Acad. Sci. Paris S\'er. I Math.} {\bf 305} (1987) 725--728.

\bibitem{AT}
{\sc A.\ Anane, N.\ Tsouli,}
On the second eigenvalue of the $p$-Laplacian, in 'Nonlinear partial differential equations (F\'es, 1994)',
Pitman Res. Notes Math. {\bf 343} (1996) 1--9.

\bibitem{B}
{\sc A.\ Bonnet,}
A deformation lemma on a $C^1$ manifold,
{\em Manuscripta Math.} {\bf 81} (1993) 339--359.

\bibitem{BF}
{\sc L.\ Brasco, G.\ Franzina,} 
Convexity properties of Dirichlet integrals and Picone-type inequalities, 
{\em Kodai Math. J.} {\bf 37} (2014) 769--799.

\bibitem{BP}
{\sc L.\ Brasco, E.\ Parini},
The second eigenvalue of the fractional $p$-Laplacian,
{\em Adv. Calc. Var.} {\bf 9} (2016) 323--355.

\bibitem{BLP}
{\sc L.\ Brasco, E.\ Lindgren, E.\ Parini,}
The fractional Cheeger problem,
{\em Interfaces Free Bound.} {\bf 16} (2014) 419--458.

\bibitem{BSY}
{\sc L.\ Brasco, M.\ Squassina, Y.\ Yang,}
Global compactness results for nonlocal problems,
{\em Discrete Contin. Dyn. Syst. Ser. S} {\bf 11} (2018) 391--424.

\bibitem{C}
{\sc M.\ Cuesta,}
Minimax theorems on $C^1$ manifolds via Ekeland variational principle,
{\em Abstr. Appl. Anal.} {\bf 2003} (2003) 757--768.

\bibitem{DPV}
{\sc E.\ Di Nezza, G.\ Palatucci, E.\ Valdinoci},
Hitchhiker's guide to the fractional Sobolev spaces,
{\em Bull. Sci. Math.} {\bf 136} (2012) 521--573.

\bibitem{DR}
{\sc P.\ Dr\'abek, S.B.\ Robinson,}
Resonance problems for the $p$-Laplacian,
{\em J. Funct. Anal.} {\bf 169} (1999) 189--200.

\bibitem{FR}
{\sc E.R.\ Fadell, P.H.\ Rabinowitz,}
Generalized cohomological index theories for Lie group actions with an application to bifurcation questions for Hamiltonian systems,
{\em Invent. Math.} {\bf 45} (1978) 139--174.

\bibitem{FP}
{\sc G.\ Franzina, G.\ Palatucci},
Fractional $p$-eigenvalues,
{\em Riv. Mat. Univ. Parma} {\bf 5} (2014) 373--386.

\bibitem{FI}
{\sc S.\ Frassu, A.\ Iannizzotto,}
Strict monotonicity and unique continuation for general non-local eigenvalue problems,
{\em Taiwan. J. Math.} {\bf 24} (2020) 681--694.

\bibitem{FI1}
{\sc S.\ Frassu, A.\ Iannizzotto,}
Extremal constant sign solutions and nodal solutions for the fractional $p$-Laplacian,
{\em J. Math. Anal. Appl.} {\bf 501} (2021) art.\ 124205.

\bibitem{FI2}
{\sc S.\ Frassu, A.\ Iannizzotto,}
Multiple solutions for the fractional $p$-Laplacian with jumping reactions,
preprint (ArXiv:2104.01937v1).

\bibitem{GP}
{\sc L.\ Gasi\'{n}ski, N.S.\ Papageorgiou,}
Nonlinear analysis, Chapman \& Hall, Boca Raton (2005).

\bibitem{HPSS}
{\sc K.\ Ho, K.\ Perera, I.\ Sim, M.\ Squassina,}
A note on fractional $p$-Laplacian problems with singular weights,
{\em J. Fixed Point Theory Appl.} {\bf 19} (2017) 157--173.

\bibitem{HS}
{\sc K.\ Ho and I.\ Sim},
Properties of eigenvalues and some regularities on fractional $p$-Laplacian with singular weights,
{\em Nonlinear Anal.} {\bf 189} (2019) 1--22.

\bibitem{ILPS}
{\sc A.\ Iannizzotto, S.\ Liu, K.\ Perera, M.\ Squassina},  
Existence results for fractional $p$-Laplacian problems via Morse theory,
{\em Adv. Calc. Var.} {\bf 9} (2016) 101--125.

\bibitem{IL}
{\sc A.\ Iannizzotto, R.\ Livrea,}
Four solutions for fractional $p$-Laplacian equations with asymmetric reactions,
{\em Mediterr. J. Math.} {\bf 18} (2021) art.\ 220.

\bibitem{IP}
{\sc A.\ Iannizzotto, N.S.\ Papageorgiou,}
Existence and multiplicity results for resonant fractional boundary value problems,
{\em Discrete Contin. Dyn. Syst. Ser. S} {\bf 11} (2018) 511--532.

\bibitem{IS}
{\sc A.\ Iannizzotto, M.\ Squassina,}
Weyl-type laws for fractional $p$-eigenvalue problems,
{\em Asymptot. Anal.} {\bf 88} (2014) 233--245.
 
\bibitem{LL}
{\sc E.\ Lindgren, P.\ Lindqvist,}
Fractional eigenvalues,
{\em Calc. Var. Partial Differential Equations} {\bf 49} (2014) 795--826.

\bibitem{LP}
{\sc M.\ Lucia, S.\ Prashanth,}
Simplicity of principal eigenvalue for $p$-Laplace operator with singular indefinite weight,
{\em Arch. Math.} {\bf 86} (2006) 79--89.

\bibitem{PAO}
{\sc K.\ Perera, R.P.\ Agarwal and D.\ O'Regan,}
Morse theoretic aspects of $p$-Laplacian type operators,
American Mathematical Society, Providence (2010).

\end{thebibliography}
\end{document}